\newtheorem{thm}{Theorem}[section]
\newtheorem{conjecture}[thm]{Conjecture}
\newtheorem{lem}[thm]{Lemma}
\newtheorem{prop}[thm]{Proposition}
\newtheorem{rem}[thm]{Remark}
\newtheorem{notation}[thm]{Notation}
\newtheorem{defn}[thm]{Definition}
\newcommand{\Aset}{\mathbb{A}}
\newcommand{\Zset}{\mathbb{Z}}
\newcommand{\Rset}{\mathbb{R}}
\newcommand{\Cset}{\mathbb{C}}
\def\red{{\rm red}}
\def\orb{{\mathcal O}}
\newcommand{\Qset}{\mathbb{Q}}
\newcommand{\bB}{\mathbf{B}}
\newcommand{\bc}{\mathbf{c}}
\newcommand{\ab}{{\rm ab}}
\newcommand{\tempere}{{\rm temp}}
\newcommand{\ttop}{{\rm top}}
\newcommand{\reel}{{\rm rc}}
\def\Sc{{\mathbf {Sc}}}
\def\fp{{\mathfrak p}}
\def\PPhi{{\Phi}}
\def\rG{{{\rm G}}}
\def\hchi{{{\widehat\chi}}}
\def\cB{{\mathcal B}}
\def\cC{{\mathcal C}}
\def\Cent{{\rm C}}
\def\Nor{{\rm N}}
\def\ie{{\it i.e.,\,}}
\def\cE{{\mathcal E}}
\def\cI{{\mathcal I}}
\def\integers{{\mathfrak o}}
\def\im{{\rm im\,}}
\def\red{{\rm r}}
\def\unip{{\rm u}}
\def\RBC{{\rm BC}}
\def\H{{\rm H}}
\def\HP{{\rm HP}}
\def\SL{{\rm SL}}
\def\SO{{\rm SO}}
\def\GL{{\rm GL}}
\def\Aut{{\rm Aut}}
\def\Hom{{\rm Hom}}
\def\conj{{\rm conj}}
\def\Ind{{\rm Ind}}
\def\Lie{{\rm Lie}}
\def\der{{\rm der}}
\def\Nor{{\rm N}}
\def\orb{{\mathcal O}}
\def\Irr{{\mathbf {Irr}}}
\def\Lie{{\rm Lie}}
\def\Prim{{\rm Prim}}
\def\cA{{\mathcal A}}
\def\cR{{\mathcal R}}
\def\cG{{\mathcal G}}
\def\cH{{\mathcal H}}
\def\cM{{\mathcal M}}
\def\cO{{\mathcal O}}
\def\cT{{\mathcal T}}
\def\cU{{\mathcal U}}
\def\cW{{\mathcal W}}
\def\Irrt{{{\rm Irr}^{\rm t}}}
\def\fb{{\mathfrak b}}
\def\fc{{\mathfrak c}}
\def\fs{{\mathfrak s}}
\def\ft{{\mathfrak t}}
\def\fB{{\mathfrak B}}
\def\fii{{\mathfrak i}}
\def\fn{{\mathfrak n}}
\def\fG{{\mathfrak G}}
\def\fP{{\mathfrak P}}
\def\fQ{{\mathfrak Q}}
\def\integers{{\mathfrak o}}
\def\bZ{{\mathbb Z}}
\def\fB{{\mathfrak B}}
\def\fp{{\mathfrak p}}
\def\St{{\rm St}}
\def\tH{{\widetilde H}}
\def\th{{\tilde h}}
\def\ttt{{\tilde t}}
\def\tM{{\widetilde M}}
\def\tu{{\tilde u}}
\def\q{{/\!/}}
\def\ab{{\rm ab}}
\def\der{{\rm der}}
\def\sgn{{\rm sgn}}
\def\pphi{{\phi}}
\def\PPhi_F{{\rm Frob}}
\def\Frob{{\rm Frob}}
\def\Flag{{\rm Flag}}
\keywords{Extended quotients, representations of $p$-adic groups, principal
series}
\subjclass[2000]{22E50, 20G05}
\begin{document}
\title[Extended quotients in the principal series]{Extended quotients in the principal series of reductive $p$-adic groups}
\author{Anne-Marie Aubert, Paul Baum and Roger Plymen}
\address{A.-M.~Aubert: Institut de Math\'ematiques de Jussieu
(U.M.R. 7586 du C.N.R.S.), Universit\'e Pierre et Marie Curie, 
4 place Jussieu, 75005 Paris, France}
\email{aubert@math.jussieu.fr}
\address{P.~Baum: Pennsylvania State University, Mathematics Department, 
University Park, PA 16802, USA}
\email{baum@math.psu.edu}
\address{R.~Plymen: School of Mathematics, Manchester University, Manchester M13 9PL, England \emph{and} School of Mathematics, Southampton University,
SO17 1BJ, England}
\email{plymen@manchester.ac.uk, r.j.plymen@soton.ac.uk}
\date{\today}
\maketitle
\thanks{}
\medskip

\begin{abstract} The geometric conjecture developed by the authors in \cite{ABP1, ABP2, ABP3, ABP4} applies to the smooth dual $\Irr(\cG)$ of any reductive $p$-adic group $\cG$.  It predicts a definite geometric structure -- the structure of an extended quotient -- for each component $\Irr(\cG)^{\fs}$ in the Bernstein decomposition of $\Irr(\cG)$. 

In this article, we prove the geometric conjecture for the principal series in any split connected reductive $p$-adic group $\cG$.   The proof proceeds via Springer parameters and Langlands parameters.   As a consequence of this approach, we establish strong links with the local Langlands correspondence.

One important  feature of our approach is the emphasis on 
two-sided cells $\bc$ in  extended affine Weyl groups.
\end{abstract}

\section{\ Introduction}    In a series of papers \cite{ABP1, ABP2, ABP3, ABP4} we have proposed a geometric conjecture about the smooth dual of any reductive $p$-adic group $\cG$.   According to the conjecture, if $\Irr(\cG)^{\fs}$ is any Bernstein component \cite{Renard} of the smooth dual $\Irr(\cG)$, then $\Irr(\cG)^{\fs}$ is in bijection with the extended quotient $T^{\fs}\q W^{\fs}$. Here $T^{\fs}$ is the complex torus and $W^{\fs}$ is the finite group (acting on $T^{\fs}$) which Bernstein assigns to the point $\fs$ of the Bernstein spectrum of $\cG$. The point of our conjecture is that $\Irr(\cG)^{\fs}$ can, in practice, be quite difficult to calculate --- but the extended quotient $T^{\fs}\q W^{\fs}$ is always easy to calculate.

More precisely, our conjecture can be stated at four levels:
\begin{itemize}
\item $K$-theory
\item Periodic cyclic homology
\item Geometric equivalence of finite type algebras
\item Representation theory
\end{itemize}

In this paper we shall consider the conjecture at the level of representation theory.   An evident question is the compatibility of our extended quotient with already known parametrizations of certain Bernstein components.   In this paper we shall establish the compatibility for the Kazhdan-Lusztig-Reeder parametrization \cite{R} of Bernstein components in the principal series.  Thus if $\cG$ is any split
connected reductive $p$-adic group with
maximal split torus $\mathcal{T}$, 
and $\Irr(\cG)^{\fs}$ is a Bernstein 
component in the principal series of $\cG$ (with the assumption, when 
$\fs\ne[\cT,1]_\cG$, that $\cG$ has connected centre and $p$ is not too small),
we construct a canonical bijection 
\[
\mu^\fs\colon(T^{\fs}\q W^{\fs})_2 \to \Irr(\cG)^{\fs}
\]
where $\Irr(\cG)^{\fs}$ is parametrized by Kazhdan-Lusztig-Reeder parameters, and $(T^{\fs}\q W^{\fs})_2$ is the extended quotient of the second kind (as explained in \S3). 

We now supply some more details.   For each $\alpha\in\Cset^\times$, we  construct a commutative diagram (see
section~9):
\begin{equation}\label{CD0}
\begin{CD}
(T^\fs\q W^{\fs})_2 @> \mu^{\fs} >>  \Irr(\cG)^{\fs}\\
@ V \pi_{\alpha} VV        @VV i_{\alpha} V\\
T^\fs/W^{\fs} @=  T^\fs/W^{\fs}
\end{CD}
\end{equation}
such that
\begin{equation}\label{root}
\pi_{\sqrt q} = \mathbf{Sc} \circ \mu^{\fs}.
\end{equation}
Here,  $q$ is the cardinality of the residue field of the underlying 
$p$-field $F$, and  
 $\mathbf{Sc}$ denotes the \emph{cuspidal support} map, also called
the \emph{infinitesimal character}, from $\Irr(\cG)$ to the set of all 
$\cG$-conjugacy classes of cuspidal pairs.
The second vertical map $i_{\alpha}$ in the diagram is defined in terms of
Kazhdan-Lusztig-Reeder parameters (see Eqn.~(\ref{eqn:iq})) while
the first vertical map $\pi_{\alpha}$ is built from 
correcting cocharacters, see \S11.

We prove that the number of points in the fibre of $\pi_{\sqrt q}$ equals the 
number of inequivalent irreducible constituents of 
$\Ind_{\mathcal{B}}^{\mathcal{G}}(\chi)$, where
$\cB\supset\cT$ is the standard Borel subgroup in $\cG$, and
$\fs=[\cT,\chi]_\cG$.
This confirms our geometric conjecture, as stated in \cite{ABP3}, for the
principal series of $\cG$.

This conjecture has been proved independently by Maarten Solleveld in
\cite{Sol} in a completely different way: we note that
Solleveld's theorem applies to situations more general  than the principal
series of a reductive $p$-adic group.
 
Our proof provides, on the case of the principal series, 
a result more precise than \cite[Theorem~2]{Sol} in the sense that we 
obtain (see Eqn.~(\ref{cell})) a cell-decomposition 
 \begin{align}\label{cells}
 (T^\fs\q W^{\fs})_2 = \bigsqcup (T^\fs\q W^{\fs})_2^{\bc}
 \end{align} 
where $\bc$ runs over the two-sided cells of the extended affine Weyl
group $X(T^\fs)\rtimes W^\fs$. Here $X(T^\fs)$ is the group of characters of $T^\fs$.   

We will refer to  $(T^\fs\q W^{\fs})_2^{\bc}$ as the $\bc$-component of $(T^\fs\q W^{\fs})_2$.
We prove, as conjectured in \cite[p.~87]{ABP4}, that
the following properties hold:
\begin{enumerate}
\item
the ordinary quotient $T^\fs/W^\fs$ is contained in the $\bc_0$-component of  $(T^\fs\q W^{\fs})_2$
where $\bc_0$ is the lowest two-sided cell 
\item
each correcting cocharacter is attached to a two-sided cell $\bc$.
\end{enumerate}
\smallskip

The cell decomposition in Eqn.~(\ref{cells}) is a key ingredient of our
characterization of the intersections of the $L$-packets with $\Irr(\cG)^\fs$ 
in the principal series case.
We prove in Theorem~\ref{thm:Lpackets} that
two points $(t, \tau)$ and $(t^{\prime}, \tau^{\prime})$ in
$(T^\fs\q W^\fs)_2$ have their images by the bijection $\mu^\fs$ in the
same $L$-packet if and only if $(t, \tau)$ and $(t^{\prime}, \tau^{\prime})$ 
belong to the same $\bc$-component in Eqn.(\ref{cells})  and $\pi_{\alpha}(t,\tau) =
\pi_{\alpha}(t^{\prime},\tau^{\prime})$ for each $\alpha\in\Cset^\times$.

\smallskip
 Solleveld writes, in the Introduction of \cite{Sol}, that that
he hopes that his Theorem 2 will be useful in the local Langlands
program and relate the semisimple element $t$ to the image of the
geometric Frobenius by the Langlands parameter. This is exactly what we achieve (by an independent route) in our 
definition of the $L$-parameter $\Phi$, see Eqn.(\ref{Phi}).  Solleveld also writes that ``The rest of such
a Langlands parameter $\Phi$ is completely beyond affine Hecke algebras,
it will have to depend on number theoretic properties of the Bernstein components $\fs$''. Our results show that for principal series representations
at least this is not the case, see once again Eqn.(\ref{Phi}).  The cell decomposition is governed by
data attached to the affine Hecke algebra and is intimately related to
the $L$-packets, see  \S12.
\smallskip

Note that, in Eqn.(\ref{root}), we have set
\begin{equation}\label{alpha}
\alpha = \sqrt q.
\end{equation}
 In the commutative diagram (\ref{CD0}), all values of $\alpha \in \Cset^{\times}$ are allowed.  In particular, we may set $\alpha = 1$.
 In view of Eqn.(\ref{alpha}), the field $\mathbb{F}_1$ with one element appears, at the very least, in the background of this article.   We hope to develop this point of view elsewhere. 
 
 \medskip 

\tableofcontents

\section{\ Extended quotients}   Let $X$ be an affine algebraic variety over $\Cset$, and let   $\mathcal{O}(X)$ denote the coordinate algebra of $X$.  
Suppose t¤hat $W$ is a finite group acting on $X$ as automorphisms of the affine variety $X$.   The quotient $X/W$ is obtained by collapsing each orbit to a point and is again an affine variety.

For $x \in X$, $W_x$ denotes the stabilizer group of $x$:
\[
W_x: = \{w \in W : wx = x\}.
\]
Now $c(W_x)$ is the set of conjugacy classes of $W_x$.  The extended quotient, denoted $X\q W$, is constructed by replacing each orbit with $c(W_x)$ where $x$ can be any point in the orbit. This construction is done as follows.

First, set
\[
\widetilde{X}: = \{(w,x) \in W \times X : wx = x\}.
\]
Then $\widetilde{X} \subset W \times X$ and is an alegbraic sub-variety of the affine variety $W \times X$. Then $W$ acts on $\widetilde{X}$:
\[
W \times \widetilde{X} \to \widetilde{X}, \quad \quad \alpha(w,x) = (\alpha w \alpha^{-1}, \alpha x)
\]
with $(w,x) \in \widetilde{X}, \alpha \in W$. Then we define
\[
X\q W: = \widetilde{X}/W.
\]
Hence the extended quotient $X\q W$ is the ordinary quotient for the action of $W$ on $\widetilde{X}$. In a straightforward way, $X\q W$ is an affine variety.
The map
\[
\widetilde{X} \to X, \quad \quad (w,x) \mapsto x
\]
is equivariant and thus passes to quotient spaces as a morphism of algebraic varieties $X\q W \to X/W$ which will be referred to as the projection of the extended quotient on the quotient.

\begin{rem}
{\rm  Let $\cO(X) \rtimes W$ be the crossed product algebra for the action of $W$ on $\cO(X)$.   Denote the periodic cyclic homology of this crossed product algebra by $\HP_j(\cO(X)\rtimes W)$ with $j = 0,1$.   There is a canonical isomorphism \cite{B} of $\Cset$-vector spaces
\begin{align}\label{cyclic}
\HP_j(\cO(X) \rtimes W) \simeq \bigoplus_l \H^{j+2l}(X\q W); \Cset)
\end{align}
where, as usual, $\H^*( X\q W; \Cset)$ is the cohomology of $X\q W$ (using its analytic topology).  The right-hand-side of Eqn.(\ref{cyclic}) is also known as the
\emph{orbifold cohomology} $H^*_{orb}(X/W ; \Cset)$ of the quotient $X/W$, see \cite{Bara}.
}
\end{rem}

The ABP (Aubert-Baum-Plymen) conjecture at the level of periodic cyclic homology is:

\begin{conjecture} 
Let $\cG$ be a reductive $p$-adic group and let $\fs$ be a point in the Bernstein spectrum of $\cG$. there there are canonical isomorphisms of $\Cset$ vector spaces
\[
\HP_j(\cI^{\fs}) \simeq \bigoplus_l \H^{j+2l}(T^{\fs}\q W^{\fs} ; \Cset)
\]
with $j = 0,1$ where $\cI^{\fs} \subset \cH(\cG)$ is the Bernstein ideal in $\cH(\cG)$ labelled by $\fs$, and $T^{\fs}, \,W^{\fs}$ are as above.
\end{conjecture}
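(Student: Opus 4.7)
The plan is to bridge $\HP_j(\cI^\fs)$ and the orbifold cohomology $\bigoplus_l \H^{j+2l}(T^\fs\q W^\fs;\Cset)$ by a chain of algebras with the same periodic cyclic homology, using equation~(\ref{cyclic}) of the Remark as the final link. The natural intermediate object is an extended affine Hecke algebra $\cH(W^\fs,\mathbf{q})$ attached to~$\fs$, and the two bridges are a Morita equivalence and a deformation of the Hecke parameter.

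First, I would invoke Bushnell--Kutzko type theory to produce an $\fs$-type $(\cJ,\lambda)$ and identify $\cI^\fs$, up to Morita equivalence, with a Hecke algebra $\cH(\cG,\lambda)\simeq\cH(W^\fs,\mathbf{q})$, where $\mathbf{q}$ is a vector of (generally unequal) Hecke parameters depending on~$\fs$. In the principal series case treated in the body of the paper, this identification comes from Borel--Casselman together with Roche's construction of types; in other cases it is partly conjectural but available in considerable generality. Since $\HP_*$ is Morita invariant, the conjecture reduces to computing $\HP_j(\cH(W^\fs,\mathbf{q}))$.

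Next, I would consider the algebraic family $\{\cH(W^\fs,\mathbf{q}^t)\}_{t\in\Cset^\times}$, which interpolates between the Hecke algebra at $t=1$ and the crossed product $\cO(T^\fs)\rtimes W^\fs = \cH(W^\fs,\mathbf{1})$ at $t=0$, and prove that $\HP_j$ is constant along this family. The tools are the Baum--Nistor equivariant Chern character together with Solleveld's spectral transfer morphisms, which propagate Plancherel data through the deformation; canonicity of the final isomorphism is then pinned down by the Chern character on each side. Combining the invariance statement with the Remark gives $\HP_j(\cH(W^\fs,\mathbf{q}))\simeq\bigoplus_l \H^{j+2l}(T^\fs\q W^\fs;\Cset)$, and composing with the Morita reduction yields the conjecture.

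The main obstacle is the deformation step for \emph{unequal} parameters: one must control how the spectrum of $\cH(W^\fs,\mathbf{q}^t)$ behaves as $t$ varies, in particular near values of~$t$ at which the algebra fails to be semisimple. This is precisely where the cell decomposition~(\ref{cells}) of $T^\fs\q W^\fs$ developed elsewhere in the paper becomes a natural organising device, since each two-sided cell~$\bc$ indexes a piece of the spectrum that should deform coherently, allowing the $\HP_*$-invariance to be checked cell by cell. A secondary obstacle is the Morita step outside the principal series, where the existence and explicit structure of an $\fs$-type is not yet settled in full generality; restricting to the principal series, as in the body of this paper, sidesteps this second difficulty.
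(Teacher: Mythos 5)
The statement you have been asked to prove is stated in the paper precisely as a \emph{Conjecture}: it is the periodic-cyclic-homology level of the ABP conjecture, and the paper does not prove it. What the paper proves (Theorem~\ref{thm:ps}) is a different, representation-theoretic statement: a bijection $\mu^{\fs}\colon (T^{\fs}\q W^{\fs})_2\to\Irr(\cG)^{\fs}$ for Bernstein components in the \emph{principal series} of a split group with connected centre, compatible with cuspidal support. So there is no ``paper's own proof'' to compare against; you are proposing an approach to an open problem that the paper deliberately leaves open.

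Assessed on its own terms, your sketch is a plausible program but has gaps that are genuinely unresolved, not merely ``to be filled in.'' First, the Morita reduction $\cI^{\fs}\sim\cH(W^{\fs},\mathbf{q})$ requires an $\fs$-type whose Hecke algebra is of extended affine type; this is available for the principal series (Borel--Casselman, Roche) but is not known for a general Bernstein component of a general reductive $p$-adic group, so the conjecture as stated (for arbitrary $\cG$ and $\fs$) cannot be reached this way. Second, and more seriously, the deformation step is the entire mathematical content: you need $\HP_j\bigl(\cH(W^{\fs},\mathbf{q}^t)\bigr)$ to be constant in $t$ all the way to $t=0$, including through values where the algebra is not semisimple and its primitive spectrum jumps. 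Citing ``spectral transfer morphisms'' and the equivariant Chern character gestures at the right toolbox, but periodic cyclic homology is \emph{not} invariant under arbitrary flat deformations, and the places where the spectrum degenerates are exactly where the argument can fail; you identify this as ``the main obstacle'' and then do not overcome it. Finally, the conjecture asserts \emph{canonical} isomorphisms, whereas a deformation along a path produces an identification that a priori depends on the path and on normalisations of the Chern character at each end; the assertion that the Chern character ``pins down'' canonicity is a claim, not an argument. In short: the route you describe is reasonable and is consistent with how the authors themselves think about the problem (the cell decomposition~(\ref{cells}) is indeed intended as an organising device), but as written it does not constitute a proof, and no proof of this conjecture appears in the paper.
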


\section{\ Extended quotients of the second kind}

As in the previous section, let $X$ be a complex affine variety with a finite group $W$ acting as automorphisms of $X$.   For each $x \in X$, $\Irr(W_x)$ is the set of (equivalence classes of) irreducible representations of the isotropy group $W_x$.   Now $\Irr(W_x)$ has the same number of elements as $c(W_x)$ but, in general, there is no canonical bijection between $\Irr(W_x)$ and $c(W_x)$.   The extended quotient of the second kind, denoted $(X\q W)_2$, is obtained by replacing each orbit by $\Irr(W_x)$, where $x$ is any point in the orbit.    The construction of $(X\q W)_2$ is done as follows.   Define
\[
\widetilde{X}_2: = \{(x,\tau) : x \in X, \tau \in \Irr(W_x)\}.
\]
Then $W$ acts on $\widetilde{X}_2$ by
\[
w(x,\tau) = (wx, w_*\tau)
\]
where $w_*\tau$ is the push-forward by $w$ of $\tau$ from $\Irr(W_x)$ to $\Irr(W_{wx})$. Then 
\[
(X\q W)_2: = \widetilde{X}_2/W
\]
the quotient of $\widetilde{X}_2$ by $W$.  Now
\[
\widetilde{X}_2 \to X, \quad \quad (x, \tau) \mapsto x
\]
is $W$-equivariant and descends to a map of quotient spaces
\[
(X\q W)_2 \to X/W.
\]
This is the projection of the extended quotient of the second kind on the quotient $X/W$. 

There is a non-canonical bijection $\nu \colon X\q W \to (X\q W)_2$ with commutativity in the diagram
\begin{equation} \label{eqn:CDnu}
\begin{CD}
X\q W @>\nu >>(X\q W)_2\\
@VVV               @VVV\\
X/W @> > id > X/W
\end{CD}
\end{equation}
To construct the bijection $\nu$, first let $H_1, \ldots, H_r$ be subgroups of $W$ such that 

(1) For each $j = 1, 2, \ldots, r$, there exists $x \in X$ with $W_x = H_j$

(2) Any $W_x$ is conjugate within $W$ to one and only one $H_j$. 

Now for each $j = 1,2,\ldots, r$ choose a bijection
\[
c(H_j) \to \Irr(H_j).
\]
We shall refer to such a system of bijections as a $c$-$\Irr$ system.   For $x \in X$, $[x]$ denotes the image of $x$ under the quotient map 
$X \to X/W$.   In $X\q W$ the pre-image of $[x]$ with respect to the projection $X\q W \to X/W$ identifies canonically with one and only one $c(H_j)$. 
The pre-image of $[x]$ with respect to the projection $(X\q W)_2 \to X/W$ identifies canonically with one and only one $\Irr(H_j)$.  Then $\nu : X\q W \to
(X\q W)_2$ maps the first pre-image to the second pre-image via the chosen bijection $c(H_j) \to \Irr(H_j)$. 

\begin{lem} \label{lem:csyst}
The bijection 
\[
X\q W \to (X\q W)_2
\]
obtained from a $c$-$\Irr$ system is continuous when $X\q W$ has the Zariski topology and $(X\q W)_2$ has the Jacobson topology.
\end{lem}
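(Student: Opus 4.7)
The plan is to check continuity by verifying that the preimage under $\nu$ of every basic Jacobson-closed subset of $(X\q W)_2$ is Zariski-closed in $X\q W$. Identify $(X\q W)_2$ with the set $\Irr(A)$ of finite-dimensional irreducible representations of the crossed product $A := \mathcal{O}(X)\rtimes W$; under this identification the Jacobson topology has as closed sets the loci $V(I) := \{\pi : I\subset\ker\pi\}$ for two-sided ideals $I\subset A$. Since $A$ is Noetherian, every $V(I)$ is the intersection of the sets $V(a) := \{\pi : \pi(a) = 0\}$ as $a$ ranges over $I$, so it suffices to show that $\nu^{-1}(V(a))$ is Zariski-closed in $X\q W$ for each $a\in A$.

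Pulling back through the quotient map $\widetilde{X}\to X\q W$, the bijection $\nu$ assigns to $(w,x)\in\widetilde{X}$ the irreducible $A$-module $\pi_{x,\phi_{W_x}([w])}$ obtained by induction from $\mathcal{O}(X)\rtimes W_x$ to $A$, where $\phi_H : c(H)\to\Irr(H)$ denotes the chosen $c$-$\Irr$ bijection at $H$. The central task is then to prove that, for fixed $a\in A$, the locus $Z_a := \{(w,x)\in\widetilde{X} : \pi_{x,\phi_{W_x}([w])}(a)=0\}$ is Zariski-closed in $\widetilde{X}$; since this locus is automatically $W$-invariant, its image in $X\q W$ is Zariski-closed. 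The plan is to stratify $\widetilde{X}$ by the conjugacy type of the isotropy subgroup $W_x$; on each locally-closed stratum the isotropy is constant up to conjugation, so $\pi_{x,\phi_{W_x}([w])}$ varies as an algebraic family in $x$, and the vanishing of $\pi(a)$ defines a Zariski-closed subset of that stratum.

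The main obstacle is gluing the strata into a single Zariski-closed subset of $\widetilde{X}$. Concretely, if $(w_n, x_n)$ lies in a generic stratum and Zariski-converges to $(w_0, x_0)$ in a more special stratum with larger isotropy $W_{x_0}$, then one must show that the specialization of the family $\pi_{x_n,\phi_{W_{x_n}}([w_n])}$ at $(w_0, x_0)$ contains the target $\pi_{x_0,\phi_{W_{x_0}}([w_0])}$ as a subquotient, so that the closed condition $\pi(a)=0$ propagates to the limit. This is precisely where the specific construction of the $c$-$\Irr$ system — fixed transversal representatives $H_j$ together with a coherent bijection $c(H_j)\to\Irr(H_j)$ — enters: it guarantees the combinatorial compatibility of the induction across strata and hence the Zariski-closedness of $Z_a$.
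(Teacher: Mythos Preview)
The paper itself gives no proof of this lemma: the statement is followed immediately by a remark, and the lemma is only cited later (Remark~\ref{rem:celldec}) to refer back to the construction of $\nu$, not to the continuity claim. So there is nothing in the paper to compare your argument against.

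Your argument has a genuine gap precisely where you flag the ``main obstacle.'' You assert that the $c$--$\Irr$ system ``guarantees the combinatorial compatibility of the induction across strata,'' but nothing in the definition supports this: a $c$--$\Irr$ system is an \emph{arbitrary} choice of bijections $c(H_j)\to\Irr(H_j)$, one for each isotropy type, with no relation imposed between the bijections for different $H_j$. The gluing you need --- that the limit of $\pi_{x,\phi_{W_x}([w])}$ along a degeneration to a larger isotropy group contain $\pi_{x_0,\phi_{W_{x_0}}([w])}$ as a subquotient --- simply does not follow.

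In fact the statement appears to be false for general $c$--$\Irr$ systems. Take $X=\Cset^3$ with $W=S_3$ permuting coordinates, and choose a system sending $[(12)]_{\langle(12)\rangle}\mapsto\sgn$ in $\Irr(\langle(12)\rangle)$ but $[(12)]_{S_3}\mapsto 1$ in $\Irr(S_3)$. Let $\alpha=\sum_{g\in S_3}g\in A$. By Frobenius reciprocity $\pi_{x,\tau}(\alpha)=0$ iff $\tau\neq 1_{W_x}$, so $V(\alpha)=\{(x,\tau):\tau\neq 1_{W_x}\}$ is Jacobson-closed. On the component $X^{(12)}/Z((12))\cong\{(a,a,b)\}\cong\Cset^2$ one finds $\nu((12),(a,a,b))=((a,a,b),\sgn)$ for $a\neq b$ and $\nu((12),(a,a,a))=((a,a,a),1)$, so $\nu^{-1}(V(\alpha))$ meets this component in $\{a\neq b\}$, the complement of the diagonal --- Zariski-open, not closed. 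Thus $\nu$ is not continuous for this system, and no argument along your lines can succeed without restricting the class of admissible $c$--$\Irr$ systems.
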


\begin{rem}  We have a canonical bijection
\[
\Irr(\cO(X)\rtimes W) \to (X\q W)_2.
\]
The advantage of $X\q W$ over $(X\q W)_2$ is that $X\q W$ is an affine variety, but $(X\q W)_2$ is not (in any canonical way) an algebraic variety. 
\end{rem}

\section{\ Twisted extended quotients of the second kind}  \label{sec: teq}
Let $J$ be a finite group and let $\alpha \in \H^2(J; \Cset^{\times})$.  Consider all maps
$\tau \colon J \to \GL(V)$  such that there exists a  
$\Cset^{\times}$-valued $2$-cocycle $c$ on $J$ with 
\[
\tau(j_1) \circ \tau(j_2)  = c(j_1, j_2)\,\tau(j_1j_2),\quad \quad [c] = \alpha
\]
 and $\tau$ is irreducible, where $[c]$ is the class of $c$ in 
$\H^2(J; \Cset^{\times})$.   

For such a map $\tau$ let $f \colon J \to \Cset^{\times}$ be any map.
 Now consider the map $(f\tau)(j): =  f(j)\tau(j)$. This is again such a map 
$\tau$ with cocycle   $c\cdot f$ defined by
\[(c\cdot f)(j_1,j_2):=c(j_1,j_2)\, \frac{f(j_1)f(j_2)}{f(j_1j_2)}.\]  
Given two such maps $\tau_1, \tau_2$, an \emph{isomorphism} is an intertwining 
operator $V_1 \to V_2$.   
Note that $\tau_1 \simeq \tau_2 \Rightarrow c_1 = c_2$. Given
$\tau_1$ and $\tau_2$,  we
define $\tau_1$ to be \emph{equivalent} to $\tau_2$
if and only if there exists $f \colon J \to \Cset^{\times}$  with
$\tau_1$ isomorphic to $f \tau_2$. The set of equivalence classes of maps 
$\tau$ is denoted $\Irr^{\alpha}(J)$. 

\smallskip

Now let $\Gamma$ be a finite group with a given action on a set $X$. Now let 
$\square$ be a given function which assigns to each $x \in X$ an element 
$\square(x) \in \H^2(\Gamma_x;\Cset^{\times})$ where $\Gamma_x = \{\gamma \in \Gamma: \gamma x = x\}$.  The function $\square$ is required to satisfy the condition
\[
\square(\gamma x) = \gamma_*\square(x), \quad \quad \forall (\gamma,x) \in \Gamma \times X
\]
where $\gamma_*\colon \Gamma_x \to \Gamma_{\gamma x}, \; \alpha \mapsto \gamma \alpha \gamma^{-1}$.   Now define
\[
\widetilde{X}_2^{\square} = \{(x,\tau) : \tau \in \Irr^{\square(x)}(\Gamma_x)\}.
\]
We have a map $\Gamma \times \widetilde{X}_2^{\square} \to \widetilde{X}_2^{\square}$ and we form the \emph{twisted extended quotient of the second kind}
\[
(X\q \Gamma)_2^{\square}: = \widetilde{X}_2^{\square}/\Gamma.
\]

We will apply this construction in the following two special cases.  

\smallskip

{\bf 1.} Given two finite groups $\Gamma_1$, $\Gamma$ and a group homomorphism
$\Gamma \to \Aut(\Gamma_1)$, we can form the semidirect product  $\Gamma_1 \rtimes\Gamma$.  Let $X = \Irr \, \Gamma_1$.   Now $\Gamma$ acts on $\Irr\, \Gamma_1$ and we get $\square$ for free.  Given $x \in \Irr\, \Gamma_1$ choose an irreducible representation  $\phi: \Gamma_1 \to \GL(V)$ whose isomorphism class is $x$. 
For each $\gamma \in \Gamma_x$ consider $\phi$ twisted by $\gamma$ \ie consider $\phi^{\gamma}: \gamma_1 \mapsto \phi(\gamma \gamma_1 \gamma^{-1})$.
Since $\gamma \in \Gamma_x$, $\phi^{\gamma}$ is equivalent to $\phi$ \ie there exists an intertwining operator $T_{\gamma} : \phi \simeq \phi^{\gamma}$. For this operator we have
\[
T_{\gamma} \circ T_{\gamma'} = c(\gamma,\gamma')T_{\gamma \gamma'}, \quad \quad \gamma, \gamma' \in \Gamma_x
\]
and $\square(x)$ is then the class in $\H^2(\Gamma_x;\Cset^{\times})$ of $c$.   

This leads to a new formulation  of a classical theorem of Clifford.
\begin{lem} \label{lem:Clifford} We have a canonical bijection
\[
\Irr(\Gamma_1 \rtimes \Gamma) \simeq (\Irr \, \Gamma_1 \q \Gamma)_2^{\square}.
\]
\end{lem}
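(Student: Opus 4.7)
The plan is to deduce this reformulation from classical Clifford--Mackey theory applied to the normal subgroup $\Gamma_1$ of $G := \Gamma_1 \rtimes \Gamma$. I will build a canonical map in each direction and verify they are mutual inverses.

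For the forward map $(\Irr\,\Gamma_1 \q \Gamma)_2^\square \to \Irr(G)$, take a representative $(x,\tau) \in \widetilde{X}_2^\square$, choose $\phi \colon \Gamma_1 \to \GL(V)$ in the isomorphism class $x$, and for each $\gamma \in \Gamma_x$ fix an intertwiner $T_\gamma \colon V \to V$ with $T_\gamma \phi(\gamma_1) T_\gamma^{-1} = \phi(\gamma\gamma_1\gamma^{-1})$. The cocycle $c$ defined by $T_\gamma T_{\gamma'} = c(\gamma,\gamma')T_{\gamma\gamma'}$ then represents $\square(x)$ by construction. Pick a representative $\tau \colon \Gamma_x \to \GL(W)$ in its equivalence class whose cocycle is exactly $c^{-1}$ -- this is always possible after twisting by a suitable $f \colon \Gamma_x \to \Cset^\times$ -- so that
\[
\widetilde\rho(\gamma_1,\gamma) := \phi(\gamma_1)\,T_\gamma \otimes \tau(\gamma), \qquad (\gamma_1,\gamma) \in \Gamma_1\rtimes\Gamma_x,
\]
is a genuine representation of $\Gamma_1\rtimes\Gamma_x$ on $V\otimes W$ because the two cocycles cancel. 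Send $(x,\tau)$ to $\Ind_{\Gamma_1\rtimes\Gamma_x}^{G}\widetilde\rho$. Changing the $T_\gamma$ by a function $f$ forces a compensating twist of $\tau$ by $f^{-1}$, which is precisely the equivalence used in the paper's definition of $\Irr^{\square(x)}(\Gamma_x)$; replacing $(x,\tau)$ by its $\Gamma$-translate yields a conjugate and hence isomorphic induced representation. The construction therefore descends to a well-defined map on $(X \q \Gamma)_2^\square$.

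Irreducibility of the induced representation is Mackey's criterion: distinct $\Gamma/\Gamma_x$-translates of $\phi$ are pairwise non-isomorphic as $\Gamma_1$-modules, so the intertwining spaces vanish for all non-trivial double cosets. For the inverse map, given $\rho \in \Irr(G)$, restriction to $\Gamma_1$ consists of a single $\Gamma$-orbit of irreducibles by classical Clifford theory; fix a summand $\phi$ in the isomorphism class $x$. Its isotypic component $U \subset \rho$ is stable under $\Gamma_1\rtimes\Gamma_x$ and factors canonically as $V \otimes W$, with $\Gamma_x$ acting projectively on $W$ through a cocycle forced to equal $c^{-1}$. The equivalence class of the resulting $\tau$ lies in $\Irr^{\square(x)}(\Gamma_x)$, independently of the summand chosen within the $\Gamma$-orbit, yielding a well-defined point of $(X \q \Gamma)_2^\square$. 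Mutual inversion is immediate from Mackey's theorem: induction of the $\phi$-isotypic component recovers $\rho$, and the $\phi$-isotypic component of the induced representation recovers $\widetilde\rho$.

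The main obstacle is the cocycle bookkeeping: matching the paper's equivalence relation on maps $\tau$, generated by scalar-valued functions $f$, to the freedom in the choice of the intertwiners $T_\gamma$, and consistently tracking the inverse-cocycle convention between $\tau$ and the $T_\gamma$. Once this is pinned down, all remaining verifications reduce to standard Clifford--Mackey theory for a normal subgroup admitting a complement.
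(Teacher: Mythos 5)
Your proof is exactly the Clifford--Mackey argument that the paper itself outsources in a single sentence to Ram--Ramagge, so the overall strategy matches the paper's intent, and the Mackey-irreducibility and mutual-inversion steps are standard and correct. There is, however, a genuine bookkeeping discrepancy that you flag as ``the main obstacle'' but do not actually resolve, and it is worth pinning down because it changes whether your map lands where the statement says it should.

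In the paper's Section~4, the $T_\gamma$ satisfy $T_\gamma T_{\gamma'} = c(\gamma,\gamma')\,T_{\gamma\gamma'}$ and $\square(x)$ is \emph{defined} to be the class $[c]\in \H^2(\Gamma_x;\Cset^\times)$, so an element $\tau\in\Irr^{\square(x)}(\Gamma_x)$ has cocycle class $[c]$. Your forward map, on the other hand, needs $\phi(\gamma_1)T_\gamma\otimes\tau(\gamma)$ to be an honest representation of $\Gamma_1\rtimes\Gamma_x$, which forces the cocycle of $\tau$ to cancel that of the $T_\gamma$'s: you correctly say $\tau$ must have cocycle $c^{-1}$, hence cocycle class $[c^{-1}]$. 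But twisting by $f\colon\Gamma_x\to\Cset^\times$ only moves the cocycle within its cohomology class, so ``pick a representative $\tau$ in its equivalence class whose cocycle is exactly $c^{-1}$'' is not available when $\tau$ has class $[c]$ unless $[c]=[c^{-1}]$. The same sign reappears in your inverse map, where the $\Gamma_x$-action on the multiplicity space $W=\Hom_{\Gamma_1}(V,U)$ (via $f\mapsto\rho(\gamma)\circ f\circ T_\gamma^{-1}$) again has cocycle $c^{-1}$, i.e.\ produces a point of $\Irr^{[c^{-1}]}(\Gamma_x)$, not $\Irr^{[c]}(\Gamma_x)$. To make your proof agree with the statement as written you must do one of two things explicitly: either observe that the paper's $\square(x)$ should really be $[c^{-1}]$ (equivalently, define the $T_\gamma$ with the opposite orientation, which is in effect the convention in Ram--Ramagge), or keep $\square(x)=[c]$ and pair $\phi(\gamma_1)T_\gamma$ with the contragredient $\tau^*$ rather than with $\tau$. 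Either fix is a one-line change, and with it the remainder of your argument is complete.
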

\begin{proof}  The proof proceeds by comparing our construction with the classical theory of Clifford; for an exposition of Clifford theory, see \cite{RamRam}. 
\end{proof}

\smallskip

{\bf 2.} Given a $\Cset$-algebra $R$, a finite group $\Gamma$ and a group
homomorphism
$\Gamma \to \Aut(R)$, we can form the crossed product algebra 
\[R\rtimes\Gamma:=\{\sum_{\gamma\in\Gamma}r_\gamma\gamma\,:\,r_\gamma\in
R\},\]
with multiplication given by the distributive law and the relation
\[\gamma r=\gamma(r)\gamma,\quad\text{for $\gamma\in \Gamma$ and $r\in R$.}\]  
Let $X = \Irr \, R$.   Now $\Gamma$ acts on
$\Irr\, R$ and as above we get $\square$ for free.
Here we have
\[\widetilde
X_2^\square=\{(V,\tau)\,:\,V\in\Irr\,R,\;\tau\in\Irr^{\square(V)}(\Gamma_V)\}.\]

\begin{lem} \label{lem:Clifford_algebras} We have a canonical bijection
\[
\Irr(R \rtimes \Gamma) \simeq (\Irr \, R \q \Gamma)_2^{\square}.
\]
\end{lem}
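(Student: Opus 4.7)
The plan is to carry out, in the algebra setting, the Clifford-theoretic decomposition that underlies Lemma~\ref{lem:Clifford}. Indeed, when $R = \Cset[\Gamma_1]$ one has $R \rtimes \Gamma = \Cset[\Gamma_1 \rtimes \Gamma]$ and $\Irr\,R = \Irr\,\Gamma_1$, so Lemma~\ref{lem:Clifford_algebras} contains Lemma~\ref{lem:Clifford} as a special case and the present proof specializes to the classical Clifford argument cited from \cite{RamRam}.

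First I would build the map from $\Irr(R \rtimes \Gamma)$ to $(\Irr\,R \q \Gamma)_2^{\square}$. Given $M \in \Irr(R \rtimes \Gamma)$, restrict $M$ to $R$ and decompose into $R$-isotypic components $M = \bigoplus_V M_V$, with $V$ ranging over $\Irr\,R$. Because $\gamma \in \Gamma$ carries the $V$-isotypic component onto the $(\gamma \cdot V)$-isotypic component, and $M$ is generated as an $R \rtimes \Gamma$-module by any one such component, the set of $V$ that occur forms a single $\Gamma$-orbit. Fix such a $V$, realized on a space $U$, and write $M_V \simeq U \otimes_\Cset W$ where $W = \Hom_R(U, M_V)$. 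The stabilizer $\Gamma_V$ preserves $M_V$, and once the intertwiners $T_\gamma \colon U \to U^\gamma$ of \S\ref{sec: teq} are fixed, the action of $\gamma \in \Gamma_V$ on $M_V$ takes the form $T_\gamma \otimes \tau(\gamma)$ for a uniquely determined $\tau(\gamma) \in \GL(W)$. The defining relation $T_\gamma T_{\gamma'} = c(\gamma,\gamma')\,T_{\gamma \gamma'}$ with $[c] = \square(V)$, combined with associativity of the $\Gamma_V$-action on $M_V$, forces $\tau$ to be a projective representation whose cocycle class equals $\square(V)$, modulo the equivalence $\tau_1 \sim f\tau_2$ built into the definition of $\Irr^{\square(V)}$. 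Irreducibility of $M$ descends to irreducibility of $\tau$, yielding a well-defined class $(V,\tau) \in (\Irr\,R \q \Gamma)_2^{\square}$.

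For the reverse direction, given $(V,\tau)$ realized on spaces $U$ and $W$, turn $U \otimes W$ into an $R \rtimes \Gamma_V$-module by letting $r \in R$ act on $U$ via $V$ and letting $\gamma \in \Gamma_V$ act through an appropriate combination of $T_\gamma$ and $\tau(\gamma)$, so chosen that the two projective cocycles cancel and the whole becomes a genuine representation of $R \rtimes \Gamma_V$. Inducing from $R \rtimes \Gamma_V$ up to $R \rtimes \Gamma$ and appealing to a standard Mackey-type irreducibility criterion produces an element of $\Irr(R \rtimes \Gamma)$, and a direct comparison shows that the two constructions invert each other modulo the $\Gamma$-action on pairs.

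The main obstacle is cocycle bookkeeping. One must verify that replacing the intertwiners $T_\gamma$ by $f(\gamma) T_\gamma$ alters $c$ by the coboundary of $f$ and $\tau$ by $f^{-1} \tau$, so that the equivalence relation of \S\ref{sec: teq} absorbs precisely the indeterminacy in the choice of intertwiners. A secondary point is the availability of the $R$-isotypic decomposition of $M|_R$ in the generality stated; in the Hecke-algebra applications of this lemma in the present paper, $R$ is well-behaved enough for both the decomposition and Mackey's criterion to apply verbatim, so the Clifford-theoretic argument carries over without modification.
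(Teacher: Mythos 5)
Your proof is correct and takes the same approach as the paper: the paper's entire proof is a citation to Ram and Ramagge's Theorem A.6, and the Clifford-theoretic argument you have written out—restriction to $R$, the $\Gamma$-orbit of isotypic components, extraction of the projective multiplicity representation $\tau$ on $W = \Hom_R(U, M_V)$, and induction back from $R \rtimes \Gamma_V$—is precisely the content of that reference. Your closing caveat about the availability of the $R$-isotypic decomposition and of a Mackey-type criterion is the right implicit hypothesis under which Ram--Ramagge's theorem operates, and it is satisfied by the finite-dimensional group algebras $\Cset[\cW^{M^0}]$ and the Iwahori--Hecke algebras $\cH(\cM(t))$ to which the lemma is applied in the paper.
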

\begin{proof}  The proof proceeds by comparing our construction with the 
theory of Clifford as stated in \cite[Theorem~A.6]{RamRam}. 
\end{proof}

\begin{notation} \label{not:rtimes} {\rm We shall denote by $\tau_1\rtimes\tau$ 
(resp. $V\rtimes\tau$) the element
of $\Irr(X\rtimes \Gamma)$ which corresponds to $(\tau_1,\tau)$ (resp.
$(V,\tau)$) by the bijection of Lemma~\ref{lem:Clifford}
(resp.~\ref{lem:Clifford_algebras}).}
\end{notation}
\section{\ Twisted extended quotients for Weyl groups}

Let $M$ be a reductive complex algebraic Lie group. Then $M$ may have a finite number of connected components,  $M^0$ is the identity component of $M$, and $\cW^{M^0}$ is the Weyl group of $M^0$:
\[
\cW^{M^0}: = N_{M^0}(T)/T
\]
where $T$ is a maximal torus of $M^0$.   We will need the analogue of the Weyl group for the possibly disconnected group $M$. 

\begin{lem} \label{lem:disconnected}
Let $M,\, M^0,\, T$ be as defined above. Then we have 
\[
\Nor_M(T)/T = \cW^{M^0} \rtimes \pi_0(M).
\]
\end{lem}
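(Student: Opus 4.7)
\bigskip

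\noindent\textbf{Proof proposal.} The plan is to exhibit a split short exact sequence
\[
1 \longrightarrow \cW^{M^0} \longrightarrow \Nor_M(T)/T \longrightarrow \pi_0(M) \longrightarrow 1
\]
and then invoke the standard identification of a split extension with a semidirect product. The main work is organising the maps and producing a natural section.

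First I would set up the sequence. The restriction to $\Nor_M(T)$ of the quotient map $M \twoheadrightarrow M/M^0 = \pi_0(M)$ has kernel $\Nor_M(T) \cap M^0 = \Nor_{M^0}(T)$, which contains $T$, and is surjective: given a component $mM^0$, the subgroup $mTm^{-1}$ is a maximal torus of the connected reductive group $M^0$, so by conjugacy of maximal tori in $M^0$ there exists $n \in M^0$ with $nm\,T\,(nm)^{-1} = T$, and $nm \in mM^0$ lies in $\Nor_M(T)$. Quotienting by $T$ gives the short exact sequence above; normality of $\cW^{M^0}$ in $\Nor_M(T)/T$ follows from the fact that $M^0 \triangleleft M$.

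Next I would produce a splitting by refining the choice of $n$. Fix a Borel subgroup $B$ of $M^0$ with $T \subset B$, and set
\[
\Nor_M(T,B) := \Nor_M(T) \cap \Nor_M(B).
\]
The main claim is that the composition $\Nor_M(T,B) \hookrightarrow \Nor_M(T) \twoheadrightarrow \pi_0(M)$ is surjective with kernel exactly $T$; this yields an isomorphism $\Nor_M(T,B)/T \xrightarrow{\sim} \pi_0(M)$ whose inverse is the desired section. Surjectivity is the substantive step: given $m \in M$, the pair $(mTm^{-1}, mBm^{-1})$ is a torus-Borel pair in $M^0$, and by the conjugacy of Borel subgroups in $M^0$ and the conjugacy of maximal tori inside a fixed Borel, one can find $n \in M^0$ such that $n(mBm^{-1})n^{-1} = B$ and $n(mTm^{-1})n^{-1} = T$; then $nm \in \Nor_M(T,B) \cap mM^0$. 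For injectivity, the kernel is $\Nor_M(T,B) \cap M^0 = \Nor_{M^0}(T) \cap \Nor_{M^0}(B)$, which equals $T$ because the Weyl group $\cW^{M^0}$ acts simply transitively on the set of Borel subgroups of $M^0$ containing $T$, so only the identity element of $\cW^{M^0}$ stabilises $B$.

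The hard part is the surjectivity of $\Nor_M(T,B) \to \pi_0(M)$, which is essentially the statement that one can choose the adjustment $n \in M^0$ to simultaneously normalise both $T$ and $B$; it rests on the Borel-theoretic facts recalled above and is the only place where the structure of reductive groups (as opposed to purely formal group theory) enters. Once the section $s\colon \pi_0(M) \to \Nor_M(T)/T$ with image $\Nor_M(T,B)/T$ is in hand, the split extension gives
\[
\Nor_M(T)/T \;=\; \cW^{M^0} \rtimes \pi_0(M),
\]
where $\pi_0(M)$ acts on $\cW^{M^0}$ by conjugation via the section $s$. This is the asserted formula.
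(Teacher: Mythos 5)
Your proposal is correct and follows essentially the same route as the paper: both fix a Borel subgroup $B \supset T$ of $M^0$, identify the complement as $\Nor_M(T,B)/T$ using conjugacy of Borel pairs to adjust any $m \in M$ into $\Nor_M(T,B)$ by an element of $M^0$, and use $\Nor_{M^0}(B)=B$ together with $\Nor_B(T)=T$ to get trivial intersection with $\cW^{M^0}$. The only difference is presentational (you frame it as a split short exact sequence, the paper verifies the product decomposition, trivial intersection, and the isomorphism $\Nor_M(T,B)/T \cong \pi_0(M)$ in sequence), not mathematical.
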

\begin{proof}
The group $\cW^{M^0}$ is a normal subgroup of $ \Nor_M(T)/T$. Indeed,
let $n\in\Nor_{M^0}(T)$ and let $n'\in\Nor_M(T)$, then $n'nn^{\prime-1}$
belongs to $M^0$ (since the latter is normal in $M$) and normalizes $T$, that is,
$n'nn^{\prime-1}\in\Nor_{M^0}(T)$. On the other hand,
$n'(nT)n^{\prime-1}=n'nn^{\prime-1}(n'Tn^{\prime-1})=n'nn^{\prime-1}T$.

Let $B$ be a Borel subgroup of $M^0$ containing $T$.   
Let $w\in \Nor_M(T)/T$. Then $wBw^{-1}$ is a Borel subgroup of $M^0$
(since, by definition, the Borel subgroups of an algebraic group are 
the maximal closed connected solvable subgroups). Moreover, $wBw^{-1}$  
contains $T$. 
In a connected reductive algebraic group, the intersection of two Borel 
subgroups always contains a maximal torus and the two Borel subgroups are 
conjugate by a element of the normalizer of that torus. Hence $B$ and
$wBw^{-1}$ are conjugate by an element $w_1$ of $\cW^{M^0}$.
It follows that $w_1^{-1}w$ normalises $B$. Hence
\[w_1^{-1}w\in \Nor_M(T)/T \cap \Nor_{M}(B)=\Nor_{M}(T,B)/T,\] 
that is, \[
\Nor_M(T)/T = \cW^{M^0}\cdot(\Nor_M(T,B)/T).\] 
Finally, we have
\[\cW^{M^0}\cap(\Nor_M(T,B)/T)=\Nor_{M^0}(T,B)/T=\{1\},\] 
since $\Nor_{M^0}(B)=B$ and $B\cap \Nor_{M^0}(T)=T$. This proves (1).

Now consider the following map:
\begin{align}\label{MM}
\Nor_{M}(T,B)/T\to M/M^0\quad\quad mT\mapsto mM^0.
\end{align}
It is injective. Indeed, let $m,m'\in\Nor_{M}(T,B)$ such that
$mM^0=m'M^0$. Then $m^{-1}m'\in M^0\cap\Nor_{M}(T,B)=\Nor_{M^0}(T,B)=T$
(as we have seen above). Hence $mT=m'T$.

On the other hand, let $m$ be an element in $M$. Then $m^{-1}Bm$ is a
Borel subgroup of $M^0$, hence there exists $m_1\in M^0$ such that
$m^{-1}Bm=m_1^{-1}Bm_1$. It follows that $m_1m^{-1}\in\Nor_M(B)$. Also
$m_1m^{-1}Tmm_1^{-1}$ is a torus of $M^0$ which is contained in 
$m_1m^{-1}Bmm_1^{-1}=B$. Hence $T$ and $m_1m^{-1}Tmm_1^{-1}$ are conjugate
in $B$: there is $b\in B$ such that $m_1m^{-1}Tmm_1^{-1}=b^{-1}Tb$. Then 
$n:=bm_1m^{-1}\in\Nor_M(T,B)$. It gives $m=n^{-1}bm_1$. Since $bm_1\in
M^0$, we obtain $mM^0=n^{-1}M^0$. Hence the map  (\ref{MM}) is surjective.
\end{proof}

\bigskip

Let $H$ denote a connected complex reductive group, let $T$ be a maximal torus in $H$.   Let 
\begin{align}\label{MMbis}
M: =  M(t) = \Cent_H(t)
\end{align}
denote the centralizer in $H$ of $t \in T$.  
\smallskip

The Weyl group of $H$ is denoted $\cW^H$. 

\begin{lem} \label{lem:centrals} 
Let $t \in T$. 
The isotropy subgroup $\cW^H_t$ is the group $\Nor_M(T)/T$, and we
have
\[
\cW^H_t = \cW^{M^0} \rtimes \pi_0(M)
\]
In the case when $H$ has simply-connected derived group, the
group $M$ is connected and $\cW^H_t$ is then the Weyl group of
$M^0$.
\end{lem}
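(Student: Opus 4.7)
The plan is to reduce the lemma to the previous one (Lemma~\ref{lem:disconnected}) after identifying the isotropy group $\cW^H_t$ with $\Nor_M(T)/T$.

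First I would compute $\cW^H_t$ directly from the definitions. An element $w \in \cW^H = \Nor_H(T)/T$ is represented by some $n \in \Nor_H(T)$; its action on $T$ fixes $t$ if and only if $ntn^{-1} = t$, which is to say $n \in \Cent_H(t) = M$. Hence
\[
\cW^H_t = (\Nor_H(T) \cap M)/T = \Nor_M(T)/T,
\]
proving the first assertion.

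Next I would check that $T$ is a maximal torus of the identity component $M^0$, so that Lemma~\ref{lem:disconnected} is applicable. Since $T$ is connected and abelian and contains $t$, it centralizes $t$, so $T \subseteq M$; connectedness then forces $T \subseteq M^0$. Any torus of $M^0$ containing $T$ is a torus of $H$ containing the maximal torus $T$, hence equals $T$, so $T$ is maximal in $M^0$. Because $t$ is semisimple (lying in a torus of $H$), the standard theorem on centralizers in connected reductive groups ensures that $M$ is reductive, though possibly disconnected. Applying Lemma~\ref{lem:disconnected} to the pair $(M,T)$ then yields
\[
\cW^H_t = \Nor_M(T)/T = \cW^{M^0} \rtimes \pi_0(M),
\]
which is the second assertion.

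For the final statement, I would invoke Steinberg's theorem: if $H$ has simply-connected derived group, then the centralizer in $H$ of any semisimple element is connected. Since $t$ is semisimple, $M = \Cent_H(t)$ is connected, so $\pi_0(M) = 1$ and $M = M^0$, giving $\cW^H_t = \cW^{M^0}$, the Weyl group of $M^0 = M$. There is no substantial obstacle here — all three parts are immediate once the identification $\cW^H_t = \Nor_M(T)/T$ is made; the only non-trivial external inputs are Lemma~\ref{lem:disconnected}, the reductivity of centralizers of semisimple elements, and (for the simply-connected case) Steinberg's connectedness theorem.
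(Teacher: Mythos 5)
Your argument is correct and follows essentially the same route as the paper's: identify $\cW^H_t$ with $\Nor_M(T)/T$ by a direct computation using $w t w^{-1} = t$, then invoke Lemma~\ref{lem:disconnected} for the semidirect product decomposition, and cite Steinberg's connectedness theorem for the simply-connected case. Your explicit check that $T$ is a maximal torus of $M^0$ (so that Lemma~\ref{lem:disconnected} applies) is a useful detail the paper states only in passing.
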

\begin{proof} Let $t \in T$. Note that $H$ and $\Cent_H(t)$ have a common maximal torus $T$.  The we have
 \begin{align*}
 \cW^H_t &  = \{w \in \cW^H : w\cdot t = t\}\\
 & = \{w \in \cW^H : wtw^{-1} = t\}\\
 & = \{w \in \cW^H : wt = tw\}\\
 & = \cW^H \cap \Cent_H(t)\\
 & = \Nor_H(T)/T \cap \Cent_H(t)\\
& = \Nor_{\Cent_H(t)}(T)/T.
\end{align*}
The result follows by applying Lemma~\ref{lem:disconnected} with $M=\Cent_H(t)$.
 
If $H$ has simply-connected derived group, then the
 centralizer $\Cent_H(t)$ is connected by Steinberg's theorem
\cite[\S 8.8.7]{CG}. 
\end{proof}

The twisted extended quotient of the second kind now elucidates the extended quotient of the second kind, for we have 
\begin{align}
(T\q \cW^H)_2  & =  \{(t,\tau) : t \in T, \tau \in
\Irr(\cW_t^H)\}/\cW^H\label{EXT1}\\
\Irr\, \cW^H_t  & =  (\Irr \, \cW^{M^0}\q
\pi_0(M))_2^{\square}\label{EXT2}
\end{align}

\section{\ The principal series of $\cG$}
\label{sec:unram}
Let $\cG$ be the group of $F$-rational points of a connected reductive 
algebraic group defined over $F$. Let $\cT$ be a maximal torus in $\cG$ and let
let $\chi$ be a smooth irreducible character of $\cT$. 
In the case where $\chi$ is non-trivial, we will assume that $\cG$ has connected
center and the residual characteristic $p$ satisfies the hypothesis in
\cite[p.~379]{Roc}. 

Let $[\cT,\chi]_{\cG}$ be the 
inertial equivalence class of the pair $(\cT, \chi)$, see \cite{Renard}.  
We will write $\fs = [\cT,\chi]_{\cG}$ 
for this point in the Bernstein spectrum $\fB(\cG)$.   
Let 
\begin{equation} \label{eqn:Ws}
W^{\fs}  = \{w \in \cW : w\cdot \fs = \fs\}.
\end{equation}

Let $X$ denote the rational co-character group of $\cT$, identified with
the rational character group of $T$.  The group $\cG$ is split, so choose an isomorphism
\begin{align}\label{FF}
\cT \simeq F^{\times} \times \cdots \times F^{\times}.
\end{align}

$T$ identifies canonically with the unramified quasicharacters of $\cT$ so the above chosen isomorphism determines an isomorphism
\[
T \simeq \Cset^{\times} \times \cdots \times \Cset^{\times}.
\]
Now $\chi$ maps $\cT$ to $\Cset^{\times}$ so the isomorphism (\ref{FF}) factors $\chi$ as $\chi = \chi_1 \cdots \chi_l$ where each $\chi_j$ is a smooth character of $F^{\times}$.  Let  $U_F$ denote the group of units in $\mathfrak{o}_F$.   We define $\hat{\chi}$ as follows:
\[
\hat{\chi} : U_F \to T, \quad \quad  y \mapsto (\chi_1(y), \ldots, \chi_l(y)).
\]
 This construction is canonical, \ie $\hat{\chi}$ depends only on $\chi$ and not on the choice of isomorphism  (\ref{FF}). 

\medskip

The image of $\hat{\chi}$ is a finite abelian subgroup of $T$.   Let $H$ denote the centralizer in $G$ of the image of $\hat{\chi}$:
\begin{equation} \label{eqn:H}
H= \Cent_G(\im \hat{\chi}). 
\end{equation}
 
\begin{lem} \label{lem:Roche}
The group $H$ is connected. The 
group $W^\fs$ defined in~{\rm (\ref{eqn:Ws})} is a Weyl group, and it
is the Weyl group of $H$:
\[W^\fs= \cW^H.\]
\end{lem}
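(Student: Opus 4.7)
The plan is two-fold: first establish that $H$ is connected (which ensures $\cW^H$ is a genuine Weyl group), and then identify $W^\fs$ with $\cW^H$ by translating the inertial-equivalence condition defining $\fs$ into a pointwise-stabilizer condition on $\im\hat{\chi}$. For the connectedness, when $\chi$ is trivial, $\im\hat{\chi}=\{1\}$ and $H=G$, so there is nothing to prove. For nontrivial $\chi$, the standing hypothesis that $\cG$ has connected centre is equivalent to the complex (dual) group $G$ having simply-connected derived subgroup. The image $\im\hat{\chi}$ is a finite subgroup of the torus $T\subset G$, hence consists of commuting semisimple elements, and Steinberg's theorem on centralizers of semisimple subsets in a reductive group with simply-connected derived subgroup gives that $H=\Cent_G(\im\hat{\chi})$ is connected; this is the precise point at which Roche's hypotheses are invoked, cf.~\cite{Roc}.

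For the identification $W^\fs=\cW^H$, the group $\cW=\Nor_\cG(\cT)/\cT$ acts on smooth characters of $\cT$ by $(w\cdot\chi)(t)=\chi(w^{-1}tw)$, and hence on the Bernstein spectrum via $w\cdot[\cT,\chi]_\cG=[\cT,w\chi]_\cG$. By the definition of inertial equivalence, $w\in W^\fs$ iff $w\chi$ and $\chi$ differ by an unramified character of $\cT$, equivalently iff they agree on the maximal compact subgroup $\cT_0\simeq U_F\times\cdots\times U_F$. Unfolding the canonical definition of $\hat{\chi}\colon U_F\to T$ and using that $\cW$ acts compatibly on $\cT$ and on the dual torus $T$ of unramified quasicharacters, this condition rewrites as $w\cdot\hat{\chi}=\hat{\chi}$, i.e.\ $w$ fixes every element of $\im\hat{\chi}$ pointwise. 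On the $\cW^H$ side, because $T\subset H$ we have $\Nor_H(T)=\Nor_G(T)\cap\Cent_G(\im\hat{\chi})$, so $\cW^H=\Nor_H(T)/T$ is precisely the subgroup of elements of $\cW$ centralizing $\im\hat{\chi}$ pointwise. The two descriptions coincide, yielding $W^\fs=\cW^H$, and by the first step $H$ is connected reductive, so this is indeed a Weyl group.

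The main obstacle is the connectedness of $H$: once that is in hand, the identification of $W^\fs$ with $\cW^H$ is a routine translation between the inertial-equivalence (unramified-twist) description of $\fs$ and the centralizer description of $H$. Connectedness itself is the delicate point, and is exactly the reason why the hypotheses of connected centre and suitable residual characteristic are imposed for nontrivial $\chi$.
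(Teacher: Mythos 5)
Your second paragraph (the identification $W^\fs=\cW^H$ once $H$ is known to be connected) is essentially the paper's argument, phrased a bit differently: you translate $w\cdot\fs=\fs$ into pointwise centralization of $\im\hat\chi$ and then observe that, with $H$ connected, this pointwise stabilizer $\Nor_H(T)/T$ is exactly $\cW^H$. The paper does the same thing via the notation $W_{\hat\chi}$ and $\cW_{\hat\chi}$, noting $W_{\hat\chi}/\cW_{\hat\chi}\simeq H/H^0=\{1\}$. That part is fine.

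The gap is in the first paragraph, in the connectedness of $H$. You write that ``Steinberg's theorem on centralizers of semisimple subsets in a reductive group with simply-connected derived subgroup gives that $H=\Cent_G(\im\hat\chi)$ is connected.'' There is no such theorem. Steinberg's connectedness theorem applies to the centralizer of a \emph{single} semisimple element (equivalently a cyclic subgroup generated by one); it does \emph{not} extend to arbitrary finite abelian subgroups of a maximal torus, and indeed the centralizer of a Klein four-group can be disconnected even when the ambient group is simply connected (for example in $G_2$, where $2$ is a torsion prime). This is precisely why the residual-characteristic hypothesis is not decorative. The paper's actual argument is a two-step iteration: one first splits $\hat\chi$ into a ``pro-$p$ part'' $\hat\chi_\unip$ (coming from $1+\fp_F$, so $\im\hat\chi_\unip$ is a finite, possibly non-cyclic, abelian $p$-group) and a ``reductive part'' $\hat\chi_\red$ (coming from $k_F^\times$, whose image is \emph{cyclic}). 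For the $p$-group one applies Steinberg's single-element theorem repeatedly, and here the hypothesis that $p$ is not a torsion prime of $R(G,T)$ is what guarantees, via Steinberg's torsion results, that after each centralization the derived group remains simply connected so that the next application is legitimate. For $\hat\chi_\red$ one then applies Steinberg once more to a single generator, using that $H_\unip$ has simply connected derived group. Your proposal collapses this entire iteration into a non-existent theorem, so the step that you yourself call ``the delicate point'' is exactly where the argument is missing.
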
 
\begin{proof}
We will treat separately the cases where $\chi$ is trivial and where is
non-trivial.

\noindent 
$\bullet$ 
We consider first the case where $\fs=[\cT,1]_\cG$ (that is,
the character $\chi$ is trivial). 
Then \[H=G\quad\text{and}\quad W^\fs=\cW.\]
The result follows.

\noindent
$\bullet$
We assume now that $\chi\ne 1$. The proof will follow the same lines as in
\cite{Roc}.

Let $R(G,T)$ denote the root system of $G$. 
The group $\Cent_G(\im\hat\chi)$ is the reductive subgroup of $G$ generated 
by $T$ and those root groups $U_\alpha$ for which $\alpha\in R(G,T)$ has 
trivial restriction to $\im\hat\chi$ together with 
those Weyl group representatives $n_w\in\Nor_G(T)$ ($w\in\cW$) for which $w(t)=t$ for all $t\in\im\hat{\chi}$.
The identity component of $\Cent_G(\im\hat\chi)$ is generated by 
$T$ and those
root groups $U_\alpha$ for which $\alpha$ has trivial restriction to
$\im\hat\chi$ (see \cite[\S~4.1]{SpringerSteinberg}).

Let $\fp_F$ and $k_F$ denote the maximal ideal of the ring of integers of $F$ and 
the residual field of $F$, respectively.
Let $\cE$ denote the maximal compact subgroup of $\cT$:
\[\cE\simeq U_F\times\cdots \times U_F,\] 
and let $\cE_\unip\simeq \fp_F\times\cdots\times\fp_F$ and $\cE_\red\simeq k_F^\times\times\cdots\times k_F^\times$ denote the pro-$p$-unipotent radical 
and the reductive quotient of $\cE$, respectively. The exact sequence
\[1\to \cE_\unip\to \cE\to \cE_\red\to 1\]
is split. Hence the restriction to $\cE$ of $\chi$ factors as
$\chi=\chi_\unip\cdot \chi_\red$ where $\chi_\unip$ 
coincides with the restriction 
of $\chi$ to $\cE_\unip$ and $\chi_\red$ is trivial on $\cE_\unip$. This implies a
similar decomposition of the character $\hat\chi$:
\[\hat\chi=\hat \chi_\unip\cdot\hat\chi_\red.\]
Since $\im\hat\chi=\im\hat \chi_\unip\,\cdot\,\im\hat\chi_\red$, we have
\[H=\Cent_{H_\unip}(\im\hat\chi_\red),\quad\text{where}\quad
H_\unip:=\Cent_G(\im\hat\chi_\unip).\]

Since $G$ has simply connected derived group, it follows from Steinberg's
connectedness theorem \cite{Steinberg} that the group $H_\unip$
is connected.
Moreover, since $\im \hat\chi_\unip$ consists of elements of $p$-power order, where $p$ is
the characteristic of $k_F$, and $p$ is not a torsion prime for $R(T,G)$,
the derived group of $H_\unip$ is simply connected (see \cite{Steinbergtorsion}). 

On the other hand $\im\hat\chi_\red$ is cyclic, since $k_F^\times$ is.
Applying Steinberg's connectedness theorem in the group $H_\unip$, we get 
that $H$ itself is connected. The first assertion of the Lemma is proved.

Now let $W_{\hat\chi}$ denote the stabilizer in $\cW$ of $\im\hat\chi$ and let
$\cW_{\hat\chi}$ be the normal subgroup generated by those reflections
$s_\alpha$ such that $\alpha$ has trivial restriction to
$\im\hat\chi$.
Then
\[W_{\hat\chi}/\cW_{\hat\chi}\,\simeq\,H/H^0=\{1\}.\]
Moreover, $\cW_{\hat\chi}$ coincides with the Weyl group of $H$.
On the other hand, 
for every $w\in\cW$ the condition $w(t)=t$ for all $t\in\im\hat\chi$ 
is equivalent to the condition $w\in W^\fs$. 
Hence we get \[W^\fs=W_{\hat\chi}=\cW_{\hat\chi}=\cW^H.\]
\end{proof}

\begin{rem}
{\rm Note that $H$ itself does not
have simply-connected derived group in general (for instance, if $G$ is
the exceptional group of type $\rG_2$, and $\chi$ is the tensor square
of a ramified quadratic character of $F^\times$ then $H=\SO(4,\Cset)$).}
\end{rem}

We summarize our findings as follows:
\begin{align}\label{fs}
W^{\fs} & = \cW^H\\
(T\q \cW^H)_2  & =  \{(t,\tau) : t \in T, \tau \in \Irr(\cW_t^H)\}/\cW^H\\
\Irr\, \cW^H_t  & =  (\Irr \, \cW^{M^0}\q \pi_0(M))_2^{\square}
\end{align}

\section{\ The cell decomposition} \label{sec:celldec}

To begin the construction of the extended quotient, we choose a 
semisimple element $t \in T$. 
 We recall the definitions in Eqn.~(\ref{MMbis}) and Eqn.~(\ref{eqn:H}):
\[
M = M(t) = \Cent_H(t), \quad \quad H = \Cent_G(\im \, \widehat{\chi}).
\]

 Let 
\begin{align}
A_x: = \pi_0 (\Cent_{M^0}(x)).
\end{align}
Let $\bB_x$ denote the variety of Borel
subalgebras of $LM^0$ that contain $x$.   All the irreducible components of $\bB_x$ have the same dimension $d(x)$
over $\Rset$, see \cite[Corollary 3.3.24]{CG}.   
The finite group $A_x$  acts on the  set
of irreducible components of $\bB_x$ \cite[p. 161]{CG}.

The Springer correspondence yields a one-to-one correspondence
\begin{equation} \label{eqn:Springercor}
(x,\varrho)\mapsto \tau(x,\varrho)\end{equation}
between the set of $M^0$-conjugacy classes of pairs $(x,\varrho)$ formed by a
nilpotent element $x \in LM^0$ and an irreducible representation 
$\varrho$ of $A=A_x$ which occurs in $\H_{d(x)}(\bB_x, \Cset)$  
and the set of isomorphism classes of irreducible representations of the Weyl 
group $\cW^{M^0}$.

\begin{rem} \label{rem:Springer}
{\rm
The Springer correspondence that we are considering in this article
 coincides with that constructed by Springer for a reductive group
over a field of positive characteristic and is obtained
from the correspondence constructed by Lusztig by tensoring the latter by
the sign representation  of $\cW^{M^0}$ (see \cite{Hot}).
}\end{rem}

\smallskip

Let $X(T)$ denote the group of characters of $T$.
Recall that $R(G,T)$ is the root system of $G$. 
We have seen in Lemma~\ref{lem:Roche} that $W^\fs$
is the Weyl group of the connected group $H$ defined in
Eqn.~(\ref{eqn:H}). The group $H$ has root datum
\[(X(T),R^{\fs},X_\bullet(T),R^{\fs\vee})\]
where $R^\fs$ is the root system:
\[R^\fs=\left\{\alpha\in
R(G,T)\;:\;(\chi\circ\alpha)_{|U_F}=1\right\},\]
and $W^{\fs}$ coincides with the Weyl group of $R^\fs$.
Hence $X(T)\rtimes W^\fs$ is the extended affine Weyl group of $H$.

Let $\bc$ be a two-sided cell of $X(T)\rtimes W^\fs$ and let $\cU$ be the 
unipotent class in $H$ which corresponds to $\bc$ by the Lusztig bijection 
\cite[Theorem~4.8]{LCellsIV}.   


We have seen in Lemma~\ref{lem:centrals} that
\[W^\fs_t= \cW^{M^0} \rtimes \pi_0(M),\]
and, from Eqn.~(\ref{EXT2}), we know that every $\tau\in  \Irr(W_t^\fs)$ can 
be written as
\[\tau=\tau(x,\varrho)\rtimes\psi\quad\text{where
$\psi\in\Irr^{\square(\tau(x,\varrho))}(\pi_0(M)_{\tau(x,\varrho)})$.}\]

\smallskip
Recall the definition:
\[
\widetilde{T}_2: = \{(t,\tau) \,:\, t \in T, \tau \in \Irr(W_t^\fs)\},
\]
\[
(T\q W^\fs)_2: = \widetilde{T}_2/W^\fs.
\]
Then we set 
\begin{align} \label{eqn:T2c}
\widetilde{T}_2^{\bc}& := \{(t,\tau(x,\varrho)\rtimes\psi)\in\widetilde{T}_2 
\,:\, \exp x \in \cU\},\\
(T\q W^\fs)_2^{\bc} & := \widetilde{T}_2^{\bc}/W^\fs.\label{eqn:T2cbis}
\end{align}
 This determines the cell-decomposition 
 \begin{align}\label{cell}
 (T\q W^{\fs})_2 = \bigsqcup (T\q W^{\fs})_2^{\bc}
 \end{align}
where $\bc$ runs over the two-sided cells of $X(T)\rtimes W^\fs$.

\begin{rem} \label{rem:celldec}
{\rm Let $\nu\colon T\q W^\fs \to (T\q W^\fs)_2$ a chosen bijection 
obtained from a $c$-$\Irr$ system in the situation of Lemma~\ref{lem:csyst}. 
We define:
\[(T\q W^{\fs})^{\bc}:=\nu^{-1}((T\q W^{\fs})_2^{\bc}).\]
It provides a non-canonical cell-decomposition of the extended quotient:
\begin{align} \label{eqn:cellddec}
 T\q W^{\fs} = \bigsqcup (T\q W^{\fs})^{\bc}.
 \end{align}
}\end{rem}   

\section{\ The Langlands parameter $\Phi$}
Let $\mathbf{W}_F$ denote the Weil group of $F$, let $\mathbf{I}_F$ be the inertia
subgroup of $\mathbf{W}_F$. 
Let $\mathbf{W}_F^{\der}$ denote the closure of the commutator subgroup of $\mathbf{W}_F$, and write $\mathbf{W}_F^{\ab} = \mathbf{W}_F/\mathbf{W}^{\der}_F$. 
The group of units in $\mathfrak{o}_F$ will be denoted $U_F$.

We recall the Artin reciprocity map $\mathbf{a}_F : \mathbf{W}_F \to F^{\times}$ which has the following properties (local class field theory):
\begin{enumerate}
\item The map $\mathbf{a}_F$ induces a topological isomorphism $\mathbf{W}^{\ab}_F \simeq F^{\times}$.
\item An element $x \in \mathbf{W}_F$ is a geometric Frobenius if and only if $\mathbf{a}_F(x)$ is a prime element $\varpi_F$ of $F$.
\item We have $\mathbf{a}_F(\mathbf{I}_F) = U_F$.
\end{enumerate}

We now consider the  principal series of $\cG$.     We recall that  
 $\mathcal{G}$ denotes a connected reductive split $p$-adic group with 
maximal split torus $\mathcal{T}$, and
$G$, $T$ denote the Langlands dual of $\mathcal{G}$, $\mathcal{T}$.
We recall that, in the case of a non-trivial inducing character, 
we assume in addition that $\cG$ has connected
center and the residual characteristic of $F$ satisfies the hypothesis in
\cite[p.~379]{Roc}.

Next, we consider  conjugacy classes in
$G$ of pairs $(\Phi,\rho)$ such that $\Phi$ is a continuous morphism
\[
\Phi\colon \mathbf{W}_F\times \SL(2,\Cset) \to G\] which is
rational on $\SL(2,\Cset)$ and such that $\Phi(\mathbf{W}_F)$ consists of semisimple 
element in
$G$, and $\rho$ will be defined in the next section.

Choose a Borel subgroup $B_2$ in $\SL(2,\Cset)$.
Let $\bB^{\Phi}$ denote the variety of Borel subgroups of $G$ containing
$\Phi(\mathbf{W}_F \times B_2)$.
The variety $\bB^{\Phi}$ is non-empty if and only if $\Phi$ factors
through $W_F^{\ab}$, (see \cite[\S~4.2]{R}).  In that case,  we view  the domain of $\Phi$ to be $F^{\times}$: 
\[
\Phi\colon F^{\times} \times \SL(2,\Cset) \to G.\]

\smallskip
We will now build such a continuous morphism $\Phi$ from data coming from
the extended quotient of second kind. 

We now work with the Jacobson-Morozov theorem \cite[p. 183]{CG}.  Let
$e_0$ be the standard nilpotent matrix in $\mathfrak{sl}(2,\Cset)$:
\[e_0 = \left(
\begin{array}{cc}
0 & 1 \\
0 & 0 \end{array}\right). \]
Let $x$ be a nilpotent element in $LM^0$. 
There exists a rational homomorphism 
\begin{equation} \label{eqn:gamt}
\gamma \colon \SL(2, \Cset) \to M^0\end{equation}
such that its differential $\mathfrak{sl}(2,\Cset) \to LM^0$
sends $e_0$ to $x$, see \cite[\S 3.7.4]{CG}.

The rational homomorphism $\gamma$
depends only on the unipotent class in $M^0$ containing $\exp x$. 

Let $\alpha \in \Cset^{\times}$.  Define the following matrix in $\SL(2,\Cset)$:
\[
Y_{\alpha} = 
\left(\begin{array}{cc}
\alpha & 0\\
0 & \alpha^{-1}\end{array}\right)
.\]
Then $\gamma$ determines a cocharacter  
\[h\colon \Cset^\times\to M^0\] by setting 
\begin{equation}\label{hh}
h(\alpha):=\gamma(Y_{\alpha})\quad\text{for $\alpha\in\Cset^\times$}.
\end{equation}
The cocharacter $h$ is \emph{associated} to $x$, see \cite[Rem.~5.5]{J}
or \cite[Rem.2.12]{FR}
(see \cite[Def.~2.9]{FR} or \cite[\S~6]{McN} for the definition of
associated). In other words, $\gamma$ is an \emph{optimal}
$\SL_2$-homomorphism for $x$ (see \cite[Def.~32]{McN}).

On the other hand, the cocharacter $h$ depends only on the nilpotent class in $M^0$ containing 
$x$.

\begin{lem} \label{lem:cocharindep}
The cocharacter $h$ can be 
identified with a cocharacter of $H$, and hence depends 
only on the nilpotent class in $H$ containing $x$.
\end{lem}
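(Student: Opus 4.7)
The plan is to split the lemma into two pieces. The identification of $h$ as a cocharacter of $H$ is immediate: since $t\in T\subseteq H$ and $M=\Cent_H(t)$, the identity component $M^0$ is a closed connected subgroup of $H$, and post-composing $h\colon\Cset^\times\to M^0$ with the inclusion $M^0\hookrightarrow H$ yields a cocharacter of $H$, which I continue to denote $h$.

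For the dependence statement, the plan is to invoke the uniqueness of associated cocharacters in any connected reductive group: for a nilpotent element $x\in LG$, the associated cocharacters of $x$ form a single $\Cent_G(x)^0$-conjugacy class of cocharacters of $G$ (see \cite[Def.~2.9, Rem.~2.12]{FR} or \cite[Def.~32,~\S6]{McN}). I apply this fact with $G=H$. The $\SL_2$-homomorphism $\gamma\colon\SL(2,\Cset)\to M^0\hookrightarrow H$ of~(\ref{eqn:gamt}) is an $\SL_2$-homomorphism into $H$ sending $e_0$ to $x$; the grading identity $\Ad(h(\alpha))x=\alpha^2 x$ is intrinsic to $x$ and $h$, and the Levi-theoretic clause of associatedness also passes from $M^0$ to $H$, since a Levi subgroup of $M^0$ in which $x$ is distinguished sits inside the Levi subgroup $\Cent_H(\im h)$ of $H$ and $x$ remains distinguished there. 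Hence $h$, viewed in $H$, is associated to $x\in LH$ in the sense of \cite{FR,McN}.

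The dependence claim then follows: if $x'\in LH$ equals $\Ad(g)x$ for some $g\in H$, the cocharacter $\Ad(g)\circ h$ is associated to $x'$ in $H$, and the uniqueness of associated cocharacters in $H$ forces every associated cocharacter of $x'$ to be $H$-conjugate to $h$. The main obstacle I anticipate is the step ``associated in $M^0$ implies associated in $H$'': this is the only point where the precise Levi-based definition of associated cocharacter intervenes, and it should reduce to the fact that the Levi subgroup of $M^0$ appearing in the definition extends, via the inclusion $M^0\hookrightarrow H$, to a Levi subgroup of $H$ containing the image of $h$, so that $x$ remains distinguished in the enlarged Levi.
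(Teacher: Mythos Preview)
Your approach matches the paper's: both identify $h$ with a cocharacter of $H$ via the inclusion $M^0\hookrightarrow H$, argue that $h$ is associated to $x$ viewed as a nilpotent of $LH$, and then use the conjugacy of associated cocharacters to conclude that $h$ depends only on the $H$-orbit of $x$.

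The only difference is in how your anticipated obstacle, ``associated in $M^0$ implies associated in $H$'', is handled. The paper dispatches it by quoting Jantzen \cite[Claim~5.12]{J}: for any connected reductive subgroup $H_2$ of a connected complex Lie group $H_1$, the cocharacters of $H_2$ associated to a nilpotent $x\in LH_2$ are precisely the cocharacters of $H_1$ associated to $x$ that take values in $H_2$. With $H_1=H$ and $H_2=M^0$ this is exactly the statement you need. Your hands-on verification via the Levi/distinguished clause is essentially the content of that result, but as written it is not complete: the containment of the $M^0$-Levi in $\Cent_H(\im h)$ and the persistence of distinguishedness there both require argument. Over $\Cset$ you can in fact bypass the Levi clause entirely, since in characteristic zero ``associated to $x$'' is equivalent to ``arises from an $\mathfrak{sl}_2$-triple with nilpositive part $x$'', and an $\mathfrak{sl}_2$-triple in $LM^0$ is trivially one in $LH$.
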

\begin{proof}
Recall J.C.~Jantzen's result \cite[Claim~5.12]{J} 
(see also \cite{FR} for a related study in
positive good characteristic): For any connected reductive 
subgroup $H_2$ of an arbitrary connected complex Lie group $H_1$, the 
cocharacters of $H_2$ associated to a nilpotent element $x\in LH_2$ are 
precisely the cocharacters of $H_1$ associated to $x$ which take values in 
$H_2$.  

Applying this with $H_1=H$ and $H_2=M^0$, we get that $h$ can be
identified with a cocharacter of $H$, and hence depends 
only on the nilpotent class in $H$ containing $x$.
\end{proof}

From now on we view $h$ as a cocharacter of $H$ associated to $x$.
We can consider $\gamma\colon \SL(2,\Cset)\to M^0\subset H$ as an
optimal $\SL_2$-homomorphism for $x$.

Now, let $\gamma'\colon
\SL(2,\Cset)\to H$ be an optimal $\SL_2$-homomorphisms for $x$ such that
\[\gamma'(Y_\alpha)=\gamma(Y_\alpha)\quad\text{for
$\alpha\in\Cset^\times$.}\]
Then $\gamma'=\gamma$ (see \cite[Prop.~42]{McN}).
Hence $\gamma$ depends only on the unipotent class in $H$ containing $\exp
x$.

\smallskip

Define the $L$-parameter  $\Phi$ as follows:
\begin{eqnarray} \label{Phi}
\quad
&\Phi \colon F^{\times} \times \SL(2,\Cset) \to G, \quad\quad  (u\varpi_F^n,Y) 
\mapsto \hchi(u)\cdot t^n\cdot \gamma(Y) \label{eqn:Phi}
\end{eqnarray}
and the real tempered parameter $\Xi$ as follows:
\begin{eqnarray} 
\quad
&\Xi \colon  F^{\times} \times \SL(2,\Cset) \to M^0, \quad \quad (u\varpi_F^n,Y) 
\mapsto \gamma(Y)
\label{eqn:Xi}
\end{eqnarray}
for all  $u \in U_F, \; n \in \Zset,\; Y \in \SL(2,\Cset)$.
\medskip

Note that the definition of $\Phi$ uses the appropriate data: the semisimple element $t \in T$, the character $\widehat{\chi}$ (which depends on the point $\fs$), and the 
homomorphism $\gamma$ (which depends on the Springer parameter $x$).    
Eqn.~(\ref{Phi}) determines the first of the Reeder parameters $(\Phi,\rho)$.  
 
 We turn next to the construction of $\rho$. 

\section{\ The  parameter $\rho$}

The centralizer in $G$ of the image of $\Phi$ 
acts naturally on the variety $\bB^{\Phi}$, and hence on the 
singular homology of $\H_*(\bB^{\Phi},\Cset)$.  This action will factor through
\[
A_{\Phi}: = \pi_0(\Cent_G(\im \Phi))
\]
 and then $\rho$ is an irreducible representation of $A_{\Phi}$ which appears in  $\H_*(\bB^{\Phi},\Cset)$. 
We have (see \cite[Lemma~4.3.1]{R}):
\[A_{\Phi}\simeq  A^+=A_x^+: =\pi_0(\Cent_M(x)).\]
We clearly have
 \[
 \Cent_G(\im \Phi) = \Cent_{M}(\im \Xi)
 \]
 Let
 \[
 A_{\Xi}: = \pi_0(C_{M^0}(\im\, \Xi).
 \]
 
 \begin{lem} \label{lem:AAA}
We have
\[
A_x = A_{\Xi}.
\]
\end{lem}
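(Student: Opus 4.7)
The plan is to show both containments of centralizers at the level of $\pi_0$, using the fact that $\Xi$ essentially records the image of the optimal $\SL_2$-homomorphism $\gamma$ attached to $x$.

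First I would unwind the definitions: since $\Xi(u\varpi_F^n,Y)=\gamma(Y)$ for all $u\in U_F$, $n\in\Zset$ and $Y\in\SL(2,\Cset)$, we have $\im\Xi=\im\gamma$, so $\Cent_{M^0}(\im\Xi)=\Cent_{M^0}(\im\gamma)$. Differentiating $\gamma\colon\SL(2,\Cset)\to M^0$ at the identity yields an $\mathfrak{sl}_2$-triple $\{x,h,y\}\subset LM^0$, where $h=d\gamma\bigl(\begin{smallmatrix}1&0\\0&-1\end{smallmatrix}\bigr)$ and $y=d\gamma\bigl(\begin{smallmatrix}0&0\\1&0\end{smallmatrix}\bigr)$. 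Because the image of $\gamma$ is generated (as an algebraic group) by the one-parameter subgroups obtained by exponentiating $x$, $h$ and $y$, we obtain the identification
\[
\Cent_{M^0}(\im\gamma)=\Cent_{M^0}(\{x,h,y\}).
\]

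Next I would invoke the Kostant--Jacobson--Morozov decomposition of the centralizer of a nilpotent (see, e.g., Carter's \emph{Finite groups of Lie type}, Prop.~5.5.9, or Collingwood--McGovern, Lemma~3.7.3): there exists a connected unipotent subgroup $U_x\subset M^0$, normalized by $\Cent_{M^0}(\{x,h,y\})$, such that
\[
\Cent_{M^0}(x)=\Cent_{M^0}(\{x,h,y\})\ltimes U_x.
\]
Since $U_x$ is connected, the inclusion $\Cent_{M^0}(\{x,h,y\})\hookrightarrow\Cent_{M^0}(x)$ induces a bijection on connected components. Combined with the previous step, this gives the chain
\[
A_\Xi=\pi_0(\Cent_{M^0}(\im\gamma))=\pi_0(\Cent_{M^0}(\{x,h,y\}))=\pi_0(\Cent_{M^0}(x))=A_x.
\]

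The only real subtlety is to make sure that $U_x$ is indeed connected in our setting; this is standard because $U_x$ is identified with the exponential of the subspace $LM^0(x)_{>0}$ of strictly positive weight vectors for $\ad h$ in the centralizer of $x$, and an exponential of a nilpotent Lie subalgebra is a connected unipotent group. No characteristic issues arise since we are working over $\Cset$. Once the Kostant decomposition is in place, the remaining identifications are formal consequences of $\im\Xi=\im\gamma$ and the fact that centralizing the image of an algebraic subgroup is equivalent to centralizing any set of topological generators of its Lie algebra.
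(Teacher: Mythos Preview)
Your proof is correct and follows essentially the same route as the paper. Both arguments rest on the Levi-type decomposition $\Cent_{M^0}(x)=\Cent_{M^0}(\im\gamma)\cdot U$ with $U$ the (connected) unipotent radical; the paper cites this as \cite[\S 3.7.23]{CG} and notes $U$ is contractible, while you cite the equivalent Kostant decomposition from Carter/Collingwood--McGovern and spell out the identification $\Cent_{M^0}(\im\Xi)=\Cent_{M^0}(\{x,h,y\})$ more explicitly.
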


\begin{proof}   According to \cite[\S  3.7.23]{CG},  we have
\[
\Cent_{M^0}(x)  = \Cent_{M^0}(\im \, \Xi)\cdot U
\] 
with $U$ the unipotent radical of $\Cent_{M^0}(x)$. Now $U$ is contractible
via the map
\[
[0,1] \times U \to U, \quad \quad (\lambda, \exp Y) \mapsto \exp( \lambda
Y)
\]
for all $Y \in \fn$ with $\exp \fn = U$.
\end{proof}

Lemma~\ref{lem:AAA} allows us to define
\[
A:  = A_x = A_{\Xi}.
\]

\medskip

Let $\cM(t)$ denote a {\it predual} of $M^0$, \ie
$M^0$ is the Langlands dual of $\cM(t)$. 
Let $\bB^{\Xi}$ denote the variety of the Borel
subgroups of $M^0$ which contain $\Xi(W_F \times B_2) = \gamma(B_2)$.

\begin{defn} If a group $A$ acts on the variety $\mathbf{X}$, let
$\cR(A,\mathbf{X})$ denote the  set of equivalence classes of irreducible representations of $A$
appearing in the homology $\H_*(\mathbf{X})$, as in \cite[p.118]{R}.  Let
$\cR_{\ttop}(A, \mathbf{X})$ denote the set of equivalence classes of irreducible
representations of $A$ appearing in the top homology of $\mathbf{X}$.
\end{defn}

\begin{lem} \label{lem:bije}
We have
\[
\cR_{\ttop}(A, \bB_x) = \cR(A, \bB^{\Xi}).
\]
\end{lem}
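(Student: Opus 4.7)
The plan is to identify $\bB^\Xi$ as the fixed-point subvariety of the cocharacter $h$ acting on the Springer fibre $\bB_x$, and then invoke a standard geometric result (due to De Concini--Lusztig--Procesi, and used in this exact form by Reeder) relating the total homology of this fixed-point set to the top homology of $\bB_x$ as modules over the component group $A$.

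First I would unravel the definitions. By Eqn.~(\ref{eqn:Xi}), $\Xi(\mathbf{W}_F \times B_2) = \gamma(B_2)$, and $\gamma(B_2)$ is generated by $\exp(x) = \gamma(\exp e_0)$ together with the image of the cocharacter $h$ from Eqn.~(\ref{hh}). Passing from Borel subgroups of $M^0$ to their Lie algebras, a Borel subgroup $B \supset \gamma(B_2)$ corresponds to a Borel subalgebra $\fb \subset LM^0$ with $x \in \fb$ and $h(\Cset^\times) \subset B$; that is,
\[
\bB^\Xi \;=\; (\bB_x)^{h(\Cset^\times)}.
\]
This identification is $\Cent_{M^0}(x) \cap \Cent_{M^0}(\im \Xi)$-equivariant, and in particular $A$-equivariant, where $A = A_x = A_\Xi$ by Lemma~\ref{lem:AAA}.

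Next I would invoke the key geometric input: the $\Cset^\times$-action on $\bB_x$ coming from $h$ yields a Bia{\l}ynicki-Birula decomposition of $\bB_x$ into affine bundles over the components of the fixed-point set $\bB^\Xi$. By a theorem of De Concini--Lusztig--Procesi (in the form used by Reeder \cite[\S4]{R} and going back in spirit to Spaltenstein), the resulting pavement by affines is such that the top-dimensional irreducible components of $\bB_x$ are in $A$-equivariant bijection with the connected components of $\bB^\Xi$, and more precisely one obtains an $A$-equivariant isomorphism
\[
\H_*(\bB^\Xi,\Cset) \;\simeq\; \H_{d(x)}(\bB_x,\Cset).
\]
(The only irreducible components of $\bB_x$ contributing to $\H_{d(x)}$ are the top-dimensional ones, and these correspond precisely to components of the fixed-point variety since each has a unique $h$-fixed point set of full dimension inside it.)

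Taking sets of irreducible $A$-constituents on both sides of this isomorphism yields $\cR_\ttop(A,\bB_x) = \cR(A,\bB^\Xi)$, as required. The main obstacle is the homological comparison in the displayed isomorphism above; everything else is a direct unwinding of the definition of $\Xi$ and of the Jacobson--Morozov triple associated with $x$. One has to be slightly careful that the cocharacter $h$ used to define the attracting decomposition really is the one whose fixed locus equals $\bB^\Xi$, but this is guaranteed by the fact that $\gamma$ is optimal for $x$ (Lemma~\ref{lem:cocharindep}), so that $h(\Cset^\times)$ lies inside every Borel subgroup containing $\gamma(B_2)$.
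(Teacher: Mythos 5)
Your identification $\bB^\Xi = (\bB_x)^{h(\Cset^\times)}$ is correct and a sensible starting point, but the key claimed isomorphism is false. You assert an $A$-equivariant isomorphism $\H_*(\bB^\Xi,\Cset) \simeq \H_{d(x)}(\bB_x,\Cset)$, comparing the \emph{total} homology of the fixed-point set with the \emph{top} homology of the Springer fibre. Take $M^0 = \SL(2,\Cset)$ and $x = 0$: then $\gamma$ is trivial, $\bB_x = \bB^\Xi = \mathbb{P}^1$, $d(x) = 2$, and $\H_{d(x)}(\bB_x) = \Cset$ while $\H_*(\bB^\Xi) = \Cset^2$. These are not isomorphic. (More generally, for $x = 0$ the left side has dimension $|\cW^{M^0}|$ and the right side is one-dimensional.) The parenthetical justification is where the error lives: it is not true that the top-dimensional components of $\bB_x$ correspond to the connected components of $\bB^\Xi$ in a way that produces the stated isomorphism, since the components of $\bB^\Xi$ are themselves projective varieties with higher homology, and the Bia{\l}ynicki-Birula decomposition relates the \emph{total} homology of $\bB_x$ to the \emph{total} homology of $\bB^\Xi$ (with grading shifts by the ranks of the attracting bundles), not the top homology to the total.

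There is a way to salvage the geometric approach, but it needs an extra ingredient you did not supply: by Shoji's result (cited in the paper), the $\varrho$-isotypic part of $\H_i(\bB_x,\Cset)$ vanishes for all $i$ whenever the top $\varrho$-isotypic piece vanishes, i.e.\ $\cR_\ttop(A,\bB_x) = \cR(A,\bB_x)$. Combining this with an $A$-equivariant BB decomposition relating $\H_*(\bB_x)$ to $\H_*(\bB^\Xi)$ would give the statement. The paper avoids all of this geometry and instead invokes the Lusztig/Barbasch--Moy classification: the bijection $(x,\varrho) \mapsto (\Xi,\varrho)$ between Springer parameters (with $\varrho$ in top homology of $\bB_x$) and real tempered Reeder parameters (with $\varrho$ in total homology of $\bB^\Xi$) is established there, and by construction it preserves $\varrho$, giving the equality of constituent sets directly. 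So your route is genuinely different, but as written it has a gap that Shoji's vanishing theorem is needed to close, and the claimed isomorphism of homology groups should be dropped in favour of the weaker (and correct) statement about shared $A$-constituents.
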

\begin{proof}  Let, as before,  $\tau$ be an irreducible representation
of $\cW^{M^0}$.   Let $(x,\varrho)$ be the Springer parameter attached to
$\tau$.   Define $\Xi$ as in Eqn.~(\ref{eqn:Xi}).  Note that $\Xi$ depends on the
morphism $\gamma$, which in turn depends on the nilpotent element $x \in
LM^0$.

Then $\Xi$ is a real tempered $L$-parameter for the $p$-adic group
$\cM(t)$, see \cite[3.18]{BM}.  According to 
\cite[\S 10.13]{Lu}, \cite{BM}, there is a bijection between
 Springer parameters and Reeder parameters:
\begin{equation}  \label{eqnarray:bij}
(x,\varrho) \mapsto (\Xi, \varrho).
\end{equation}
Now $\varrho$ is an irreducible representation of $A$ which appears
simultaneously in $\H_{d(x)}(\bB_x, \Cset)$ and $\H_*(\bB^{\Xi}, \Cset)$.
If $\H_{d(x)}(B_x,\Cset)_\varrho=0$ then $\H_i(B_x,\Cset)_\varrho=0$ for any $i\ge 0$, where
$\H_i(B_x,\Cset)_\varrho$ denotes the $\rho$-isotypic subspace of $\H_i(B_x,\Cset)$ (see
\cite[bottom of page~296 and Remark 6.5]{Shoji}).

\end{proof}

Define 
\begin{eqnarray} 
\quad
&\Upsilon \colon F^{\times}\times \SL(2,\Cset) \to H, \quad\quad  (w\varpi_F^n,Y) 
\mapsto \hchi(w)\cdot t^n \cdot \gamma(Y)
\label{eqn:Upsilon}
\end{eqnarray}
\begin{eqnarray} 
\quad  
&\;\; \Psi \colon F^{\times} \times \SL(2,\Cset) \to M^0, \quad\quad 
(u\varpi_F^n,Y) \mapsto \hchi(w)\cdot t^n \cdot \gamma(Y)
\label{eqn:Psi}
\end{eqnarray}

\begin{lem} \label{lem:PX} We have
\[
\bB^{\Psi} = \bB^{\Xi}.
\]
\end{lem}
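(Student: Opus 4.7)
The proof should reduce the equality of the two flag varieties to showing that, for every Borel subgroup $B$ of $M^0$, containing $\gamma(B_2)$ is equivalent to containing the whole image $\Psi(F^{\times}\times B_2)$. The plan is therefore to show that the ``extra'' factors appearing in $\Psi$ but not in $\Xi$, namely the elements $\hchi(u)$ for $u\in U_F$ and the powers $t^n$, already lie in every Borel subgroup of $M^0$, hence impose no additional incidence condition.

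First I would unwind the images. By construction, $\Xi(F^\times\times B_2)=\gamma(B_2)$, while $\Psi(F^\times\times B_2)=\hchi(U_F)\cdot\{t^n:n\in\Zset\}\cdot\gamma(B_2)$. Thus the inclusion $\bB^{\Psi}\subset\bB^{\Xi}$ is trivial, and all the content is in the reverse inclusion.

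The key step is to locate $\hchi(U_F)$ and $t$ in the centre $Z(M^0)$. For $\hchi(U_F)$: by the very definition $H=\Cent_G(\im\hchi)$, so $\im\hchi$ lies in the centre of $H$; on the other hand, $\hchi$ takes values in $T$, and $T\subset M^0$ (because $T\subset \Cent_H(t)=M$ and $T$ is connected), so $\hchi(U_F)\subset T\cap Z(H)\subset Z(M^0)$. For $t$: we have $t\in T\subset M^0$, and $t$ commutes with all of $M=\Cent_H(t)\supset M^0$, hence $t\in Z(M^0)$.

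Once both factors are shown to be central in $M^0$, I invoke the standard fact that the centre of a connected reductive group is contained in every maximal torus (since $Z(M^0)\subset\Cent_{M^0}(T')=T'$ for any maximal torus $T'$ of $M^0$), and hence in every Borel subgroup of $M^0$. Consequently, every Borel $B$ of $M^0$ automatically contains $\hchi(U_F)$ and all powers $t^n$, so
\[
B\supset\Psi(F^\times\times B_2)\iff B\supset\gamma(B_2)=\Xi(F^\times\times B_2),
\]
which yields $\bB^{\Psi}=\bB^{\Xi}$. I do not anticipate a serious obstacle: the only subtlety is keeping track of why $\hchi(U_F)$ actually lands inside $M^0$ (and not merely in $M$), which is handled by the fact that $\hchi$ factors through the connected torus $T$.
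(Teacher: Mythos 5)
Your proof is correct and follows essentially the same approach as the paper: show that the extra factors appearing in $S_\Psi$ but not in $S_\Xi$ lie in every Borel subgroup of $M^0$, so they impose no further incidence condition. The paper's proof argues that $t$ commutes with the maximal torus $T_\fb$ of any Borel $\fb\subset M^0$ (because $T_\fb\subset M=\Cent_H(t)$), hence $t\in\Cent_{M^0}(T_\fb)=T_\fb\subset\fb$; your route via $t\in Z(M^0)\subset T_\fb$ is the same argument rephrased. One small remark in your favour: the paper records $S_\Psi=\langle t\rangle\gamma(B_2)$, silently dropping the factor $\hchi(U_F)$ from the image of $\Psi$, whereas you explicitly justify why $\hchi(U_F)\subset Z(H)\cap T\subset Z(M^0)$ and hence lies in every Borel of $M^0$ as well; this makes your write-up slightly more complete, though it does not change the underlying idea.
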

\begin{proof}  
We note that
\[
S_{\Psi} = < t > \gamma(B_2), \quad \quad S_{\Xi} = \gamma(B_2)
\]

Let $\fb$ denote a Borel subgroup of the reductive group $M$.  Since $\fb$ is maximal among the connected solvable subgroups of $M$,  we have 
$\fb \subset M^0$.   Then we have $\fb = T_{\fb}U_{\fb}$ with $T_{\fb}$ a maximal torus in $M^0$, and 
$U_{\fb}$ the unipotent radical of $\fb$.  Note that $T_{\fb} \subset M^0$.  Therefore $yt = ty$ for all $y \in T_{\fb}$. This means that $t$ centralizes $T_{\fb}$, i.e. $t \in Z(T_{\fb})$. In a connected Lie group such as $M^0$, we have 
\[
Z(T_{\fb}) = T_{\fb}\]
 so that $t \in T_{\fb}$. Since $T_{\fb}$ is a group, it follows that $< t > \, \subset T_{\fb}$. 

As a consequence, we have
\[
\fb \supset \,  < t > \gamma(B_2) \iff \fb  \supset \gamma(B_2).
\]
\end{proof}

\medskip

Let $\bB^{\Upsilon}$ denote the variety of Borel subgroups of $H$ containing the image
$ \Upsilon(W_F \times B_2)$.

\begin{lem} \label{lem:UP} We have
\[
\mathcal{R}(A, \bB^{\Upsilon}) = \mathcal{R}(A, \bB^{\Psi})
\]
\end{lem}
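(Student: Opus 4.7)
The plan is to produce an $A$-equivariant isomorphism $\bB^{\Upsilon}\simeq\bB^{\Psi}$ of algebraic varieties; then the induced isomorphism on singular homology intertwines the $A$-actions, so exactly the same irreducible representations of $A$ occur in $\H_*(\bB^{\Upsilon},\Cset)$ and $\H_*(\bB^{\Psi},\Cset)$, which is the claim.

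First I would analyze the image of $\Upsilon$. By construction,
\[\im\Upsilon\;=\;\im\hchi\,\cdot\,\langle t\rangle\,\cdot\,\gamma(B_2).\]
Because $H=\Cent_G(\im\hchi)$ contains $T\supset\im\hchi$, the finite subgroup $\im\hchi$ lies in the centre of $H$, hence in every Borel subgroup of $H$. Consequently $\bB^{\Upsilon}$ is precisely the variety of Borel subgroups of $H$ that contain both $t$ and $\gamma(B_2)$.

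Next I would invoke the classical fact (cf.\ \cite[\S 6.4]{CG}) that, for the semisimple element $t\in H$ and $M=\Cent_H(t)$, the map $B\mapsto B\cap M^0$ is a bijection between the set of Borel subgroups of $H$ containing $t$ and the set of Borel subgroups of $M^0$, with inverse sending a Borel $B'\subset M^0$ to the unique Borel of $H$ containing both $B'$ and $t$. (Recall $M^0$ is connected reductive, and Borels of $M$ coincide with Borels of $M^0$ by definition.) Under this bijection, a Borel $B$ of $H$ contains $\gamma(B_2)$ if and only if $B\cap M^0$ does, since $\gamma(B_2)\subset M^0$. Combined with Lemma~\ref{lem:PX} this yields a bijection
\[\bB^{\Upsilon}\;\stackrel{\sim}{\longrightarrow}\;\bB^{\Psi}\;=\;\bB^{\Xi},\qquad B\mapsto B\cap M^0,\]
and it is straightforward to check that it is a morphism of varieties (it is the restriction of the standard isomorphism between the full flag varieties).

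It remains to verify $A$-equivariance. The group $A=A_x=\pi_0(\Cent_{M^0}(x))$ is represented by elements of $\Cent_{M^0}(x)$; such an element centralizes $x$, hence centralizes $\gamma(\SL(2,\Cset))$ (since $\gamma$ is an optimal $\SL_2$-homomorphism for $x$, any element centralizing $x$ centralizes the whole image of $\gamma$, see \cite[\S 3.7.23]{CG}), and it lies in $M^0\subset M=\Cent_H(t)$, so it centralizes $t$; finally, $\im\hchi$ is central in $H\supset M^0$, so every element of $A$ centralizes $\im\Upsilon$ as well as $\im\Psi$. Thus $A$ acts on both $\bB^{\Upsilon}$ and $\bB^{\Psi}$ by conjugation, and the bijection $B\mapsto B\cap M^0$ manifestly commutes with these actions. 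Passing to homology gives an $A$-equivariant isomorphism $\H_*(\bB^{\Upsilon},\Cset)\simeq \H_*(\bB^{\Psi},\Cset)$ and the equality $\cR(A,\bB^{\Upsilon})=\cR(A,\bB^{\Psi})$ follows.

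The only delicate point I foresee is the first step, namely making precise the bijection between Borels of $H$ containing a fixed semisimple $t$ and Borels of $M^0$: once that is in place, verifying $A$-equivariance and the compatibility with $\gamma(B_2)$ is formal.
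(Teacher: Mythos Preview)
Your overall strategy is on the right track, and you even flagged the delicate step yourself---but that step is precisely where the argument breaks. The map $B\mapsto B\cap M^0$ from Borel subgroups of $H$ containing $t$ to Borel subgroups of $M^0$ is \emph{not} a bijection in general: it is surjective, but many-to-one. A baby example is $H=\PGL_2$ and $t$ the image of $\mathrm{diag}(1,-1)$; then $M^0$ is the diagonal torus, which has a single Borel (itself), yet $t$ lies in both the upper- and lower-triangular Borels of $H$. In the paper's setting $H$ need not have simply connected derived group (the authors remark this explicitly), so $M=\Cent_H(t)$ may be disconnected, and correspondingly $\bB^t$ has several connected components, each of which maps isomorphically to $\Flag\,M^0$.

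The paper's proof repairs exactly this: one decomposes $\bB^t=\bB_1\sqcup\cdots\sqcup\bB_m$ into $M^0$-orbits (the connected components), observes that the restriction of $B\mapsto B\cap M^0$ to each $\bB_j$ is an $M^0$-equivariant isomorphism onto $\Flag\,M^0$, and hence $\bB^{\Upsilon}\simeq\bigsqcup_{j=1}^m \bB^{\Psi}$ as $A$-varieties. Thus $\H_*(\bB^{\Upsilon},\Cset)$ is a direct sum of $m$ copies of $\H_*(\bB^{\Psi},\Cset)$ as $A$-modules, and the same irreducibles of $A$ appear on both sides. So your equivariance discussion is fine; what needs to change is replacing ``bijection'' with ``$m$-fold $A$-equivariant covering,'' after which the homology argument goes through.
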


\begin{proof}    
We denote the Lie algebra of $M^0$ by $LM^0$.   We note that the codomain of $\Psi$ is $M^0$.  

Let $\bB^t$ denote the variety of all Borel subgroups of $H$ which contain $t$.  Let $B \in \bB^t$. 
Then $B \cap M^0$ is a Borel subgroup of $M^0$.    

The proof  in \cite[p.471]{CG} depends on the fact that $M^0$ is connected, and also on
a triangular decomposition of $\Lie(M^0)$:
\[
\Lie\,M^0 = \fn^t \oplus \ft \oplus \fn_{-}^t
\]
from which it follows that $\Lie\, B \cap \Lie\, M^0 = \fn^t \oplus \ft$ is a Borel subalgebra in $\Lie \, M^0$, where $\fn^t$ denotes 
the centralizer of $t$. 

There is a canonical map 
\begin{align} \label{eqn:(7)} 
\bB^t \to \Flag \, M^0, \quad B \mapsto B \cap M^0
\end{align}
Now $M^0$ acts by conjugation on $\bB^t$. We have
\begin{align}
\bB^t = \bB_1 \sqcup \bB_2 \sqcup \cdots \sqcup \bB_m
\end{align}
a disjoint union of $M^0$-orbits,  see \cite[Prop. 8.8.7]{CG}. These orbits are the connected components of $\bB^t$, and the irreducible components of the projective variety
$\bB^t$. The above map~(\ref{eqn:(7)}), restricted to any one of these orbits, is a bijection from the $M^0$-orbit onto $\Flag \, M^0$ and is $M^0$-equivariant. It is then clear that 
\[
\bB_j^{\Upsilon} \simeq \Flag \, (M^0)^{\Psi}
\]
for each $1 \leq j \leq m$.  We also have $t \in S_\Upsilon = S_{\Psi}$.  Now
\[
\bB^{\Upsilon} =  (\bB^t)^{\Upsilon} = (\bB^t)^{\Psi}
\]
and then
\[
\H_*(\bB^{\Upsilon}, \Cset) = \H_*(\bB_1^{\Psi}, \Cset) \oplus \cdots \oplus \H_*(\bB_m^{\Psi}, \Cset)
\]
a direct sum of \emph{equivalent} $A$-modules.
Hence $\varrho$
occurs in $\H_*( \bB^{\Upsilon},\Cset)$ if and only if it occurs
$\H_*(\bB^{\Psi}, \Cset)$. 
\end{proof} 

\medskip
Recall that 
\[A^+=A_x^+=\pi_0(\Cent_M(x))\quad\text{and}\quad A=A_x=\pi_0(\Cent_{M^0}(x)).\]

\begin{lem} \label{lem:inertias}
Let $\pi_0(M)_x$ denote the isotropy group of $x$ in $\pi_0(M)$.
\begin{itemize}
\item[{\rm (1)}]
We have $A \rtimes \pi_0(M)_x = A^+$, and hence:
\[
 (\Irr(A) \q \pi_0(M)_x)_2^{\square} \simeq
\Irr(A^+), \quad \quad (\varrho, \psi) \mapsto \varrho
\rtimes \psi=:\rho
 .\]
\item[{\rm (2)}]
Moreover, we have
\[
\varrho \in \cR_{\ttop}(A, \bB_x) \iff \rho \in \cR(A^+,\bB^{\Phi})
.\]
\end{itemize}
\end{lem}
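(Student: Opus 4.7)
My plan is to prove (1) by unpacking the centralizer structure and then applying Lemma~\ref{lem:Clifford}, and to prove (2) by combining the identifications from the preceding lemmas with a Clifford-theoretic analysis of the $A^+$-action on the homology of $\bB^\Phi$.

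For (1), I first observe that $\Cent_{M^0}(x) = M^0 \cap \Cent_M(x)$ is an open normal subgroup of $\Cent_M(x)$, since $M^0$ is open and normal in $M$. Hence $\Cent_M(x)^0 = \Cent_{M^0}(x)^0$, and passing to groups of connected components gives a short exact sequence
\[
1 \to A \to A^+ \to \Cent_M(x)/\Cent_{M^0}(x) \to 1.
\]
The rightmost quotient embeds into $M/M^0 = \pi_0(M)$ with image precisely $\pi_0(M)_x$, namely the set of cosets admitting a representative that centralizes $x$. To obtain a splitting I would invoke the Jacobson--Morozov theorem: choose an $\mathfrak{sl}_2$-triple $(x,h,y)$ with associated morphism $\gamma\colon \SL(2,\Cset)\to M^0$, and use Kostant's decomposition $\Cent_M(x) = \Cent_M(\gamma(\SL(2,\Cset)))\cdot U$, where $U$ is the unipotent radical of $\Cent_{M^0}(x)$. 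This identifies $A^+$ with $\pi_0(\Cent_M(\gamma(\SL(2,\Cset))))$ and $A$ with $\pi_0(\Cent_{M^0}(\gamma(\SL(2,\Cset))))$. Since $\Cent_M(\gamma(\SL(2,\Cset)))$ is reductive, the surjection onto its group of connected components admits a section, yielding the splitting $A^+ = A \rtimes \pi_0(M)_x$. The bijection $(\Irr(A)\q\pi_0(M)_x)_2^{\square}\simeq \Irr(A^+)$, sending $(\varrho,\psi)$ to $\varrho\rtimes\psi$, then follows at once from Lemma~\ref{lem:Clifford}.

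For (2), combining Lemmas~\ref{lem:bije}, \ref{lem:PX} and~\ref{lem:UP} gives
\[
\cR_{\ttop}(A,\bB_x) = \cR(A,\bB^{\Xi}) = \cR(A,\bB^{\Psi}) = \cR(A,\bB^{\Upsilon}).
\]
The next step is to identify $\bB^\Phi$ with $\bB^{\Upsilon}$ as $A$-varieties. Since $H = \Cent_G(\im\hchi)$ is connected by Lemma~\ref{lem:Roche} and $\im\hchi \subset Z(H)$, each Borel subgroup of $G$ containing $\Phi(\mathbf{W}_F\times B_2)$ meets $H$ in a Borel subgroup of $H$ containing $\Upsilon(\mathbf{W}_F\times B_2)$, and the assignment $B\mapsto B\cap H$ is an $H$-equivariant bijection $\bB^\Phi\simeq\bB^{\Upsilon}$ (this is the standard description of the Borel fixed points of a semisimple subset with connected centralizer in $G$). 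The induced isomorphism in singular homology is $A$-equivariant.

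Finally, Clifford theory applied to the $A^+$-module $\H_*(\bB^\Phi,\Cset)$ completes the picture: restriction to $A$ recovers $\H_*(\bB^{\Upsilon},\Cset)$, whose irreducible $A$-summands are exactly the $\varrho\in \cR_{\ttop}(A,\bB_x)$ by the chain above. The $\varrho$-isotypic component is stable under the stabilizer of $[\varrho]$ in $\pi_0(M)_x$ and carries a projective representation with cocycle class $\square(\varrho)$; each irreducible constituent $\psi$ corresponds, via Lemma~\ref{lem:Clifford}, to an irreducible $A^+$-summand $\rho = \varrho\rtimes\psi$ of $\H_*(\bB^\Phi,\Cset)$, and conversely every $\rho\in \cR(A^+,\bB^\Phi)$ restricts with an irreducible summand $\varrho\in\cR_{\ttop}(A,\bB_x)$. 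The main obstacle, as I see it, is verifying that every admissible extension $\rho = \varrho\rtimes\psi$ of a fixed $\varrho\in\cR_{\ttop}(A,\bB_x)$ does occur in $\cR(A^+,\bB^\Phi)$ — equivalently, that the projective representation of the stabilizer of $[\varrho]$ on the $\varrho$-multiplicity space of $\H_*(\bB^\Phi,\Cset)$ contains every irreducible of the prescribed cocycle class. This independence from $\psi$ is the key geometric input, and I would extract it from Reeder's description~\cite{R} of the component-group action on $\H_*(\bB^\Phi,\Cset)$ combined with Lusztig's analysis underlying the Deligne--Langlands parametrization.
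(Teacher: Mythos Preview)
Your approach to (1) differs from the paper's: rather than lifting to a simply-connected cover $\tilde H$ with $H=\tilde H/Z$ and citing Reeder \cite[\S 3.2--3.3]{R} for the structure of $A_x^+$, you work directly in $M$. The exact sequence $1\to A\to A^+\to\pi_0(M)_x\to 1$ is correctly derived. But your splitting argument has a gap: a section of the surjection $C:=\Cent_M(\gamma(\SL(2,\Cset)))\to\pi_0(C)=A^+$ (even granting that reductivity of $C$ supplies one, which is not a standard fact) is a map $A^+\to C$; what is actually needed is a section of $A^+\to\pi_0(M)_x$, i.e.\ a subgroup of $A^+$ projecting isomorphically onto $\pi_0(M)_x$. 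These are different problems, and reductivity of $C$ does not obviously resolve the second. The paper's passage to $\tilde H$ is arranged precisely so that Reeder's structural results on $\tM=(\tM^+)^0$ and $\pi_0(\Cent_{\tM^+}(x))\simeq A_x^+$ apply and deliver the semidirect product.

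For (2), your chain through Lemmas~\ref{lem:bije}, \ref{lem:PX}, \ref{lem:UP} matches the paper. However, the asserted \emph{bijection} $\bB^\Phi\simeq\bB^\Upsilon$ via $B\mapsto B\cap H$ is not correct in general: by the same mechanism as in the proof of Lemma~\ref{lem:UP} (now applied to the pair $G\supset H$ rather than $H\supset M^0$), the variety $\bB^\Phi$ is a disjoint union of several $H$-orbits, each mapping isomorphically to $\bB^\Upsilon$. This does not damage the conclusion $\cR(A^+,\bB^\Phi)=\cR(A^+,\bB^\Upsilon)$, since the copies carry identical $A^+$-actions on homology; the paper obtains this step by citing \cite[Lemma~4.4.1]{R} rather than arguing directly. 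Your identification of the real obstacle --- that \emph{every} admissible $\psi$ occurs in the multiplicity space --- is exactly right. The paper does not supply an independent argument for this either: it invokes Reeder's bijection \cite[Lemma~3.5.3]{R} between $H$-conjugacy classes of quadruples $(\tilde t,\tilde u,\varrho,\psi)$ and triples $(t,u,\rho)$, which packages precisely this Clifford-theoretic input.
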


\begin{proof}
Let $\tH$ be a connected complex reductive group with 
simply connected derived subgroup and 
such that $H=\tH/Z$ where $Z$ is a finite
subgroup of the centre of $\tH$ (see \cite[\S~3]{R}). 
The natural projection
$p\colon\tH\to H$ is an isogeny. Let $\tilde t$ be a lift of $t$ in $\tH$, 
that is, $p(\tilde t)=\tilde t Z=t$. 
We recall the definition in Eqn.~(\ref{MMbis}):
\[M: =  M(t) = \Cent_H(t).\]
Let $LM^0$ denote the Lie algebra of $M^0$.  For $x\in LM^0$, let $\Cent_{M^0}(x)$
denote the centralizer of $x$ in $M^0$, via the adjoint representation of 
$M^0$ on $LM^0$. 
The isogeny $p$ induces an isomorphism from the set of nilpotent
elements in $L\tM$ to the set of nilpotent elements in $LM^0$,
and we will not distinguish between these
sets of nilpotent elements.
Let $x$ be a nilpotent element in $LM^0$.  

We set
\[\tM:=\Cent_\tH(\ttt)\quad\text{and}\quad\tM^+:= \Cent_{\tH}(t).\]
Then we have
\[\tM^+=\left\{\th\in\tH\,:\,\th (\ttt Z) \th^{-1}=\ttt Z\right\}=
\left\{\th\in\tH\,:\,(\th\,\ttt\,\th^{-1})Z\subset \ttt Z\right\}.\]
Since $\tH_\der$ is simply connected, the group $\tM$ is connected and \cite[\S~3.1]{R}
it is the identity component of $\tM^+$: 
\begin{equation} \label{eqn:iotacent}
\tM=(\tM^+)^0.\end{equation} 
Viewing $x$ as a nilpotent element
in $L\tM$, we can consider the centralizer of $x$ in $\tM^+$. We have
(see \cite[\S~3.3]{R}):
\[\pi_0(\Cent_{\tM^+}(x))\simeq A_x^+.\]
(In particular, taking $x=0$, we get $\pi_0(\tM^+)\simeq \pi_0(M)$.) 

Then \cite[\S~3.2-3.3]{R}, we have an exact sequence
 \begin{align}
 1 \to A_x \to A_x^+ \to \pi_0(M)_x \to 0.
 \end{align}
 so we have 
 \[ A_x \rtimes \pi_0(M)_x = A_x^+. \]
This gives (1).
  
Let $u:=\exp(x)\in M^0$.
Let $\tu$ denote a lift of $u$ in $\tH$. Since $ut=tu$, by applying $p$,
we get 
$\tu\tilde t=\tilde t\tu$, that is, $\tu\in \tM$.
It follows from \cite[Lemma~3.5.3]{R} that 
\[(\tilde t,\tu,\varrho,\psi) \mapsto (t, u,
\rho)\] induces a bijection between
$H$-conjugacy classes of quadruples $(\tilde t,\tu,\varrho,\psi)$ where 
\[\varrho\in\cR(A,\bB^\Upsilon)\quad\text{and}\quad
\psi\in\Irr^{\square(\varrho)}((A^+/A)_\varrho),\]
and $H$-conjugacy classes of triples $(t,u,\rho)$, where 
$\rho\in\cR(A^+,\bB^\Upsilon)$ is such that the restriction of $\rho$
to $A$ contains $\varrho$. 

By combining Lemmas~\ref{lem:bije}, \ref{lem:PX}, \ref{lem:UP}, we get
\[\cR(A,\bB^\Upsilon)=\cR_{\ttop}(A,\bB_x).\]
Also by \cite[Lemma~4.4.1]{R}
we have \[\cR(A^+,\bB^\Upsilon)=\cR(A^+,\bB^\Phi).\]
The result follows.
\end{proof}

\begin{lem} \label{lem:inertias1}
For every $\varrho\in\cR_{\ttop}(A,\bB_x)$, we have
\[\pi_0(M)_{\tau(x,\varrho)}=(\pi_0(M)_x)_{\varrho}=(A^+/A)_\varrho.\]
\end{lem}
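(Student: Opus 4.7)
The plan is to split the three-way equality into its two constituent equalities and handle them separately, the second being nearly formal and the first requiring equivariance of the Springer correspondence with respect to the full component group action.

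For the right-hand equality $(\pi_0(M)_x)_\varrho=(A^+/A)_\varrho$, I would simply invoke Lemma~\ref{lem:inertias}(1), which provides the short exact sequence $1\to A\to A^+\to \pi_0(M)_x\to 1$ and the splitting $A\rtimes\pi_0(M)_x=A^+$. This identifies $\pi_0(M)_x$ with $A^+/A$ as groups acting on $\Irr(A)$, since the action of $\Cent_M(x)/\Cent_{M^0}(x)$ on $A=\pi_0(\Cent_{M^0}(x))$ by conjugation descends through both identifications. Hence the stabilizers of $\varrho$ computed on either side agree tautologically.

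For the left-hand equality $\pi_0(M)_{\tau(x,\varrho)}=(\pi_0(M)_x)_\varrho$, the key ingredient I would use is the $M$-equivariance of the Springer correspondence: the group $M$ acts on $LM^0$, on the set of pairs $(x,\varrho)$ up to $M^0$-conjugacy (via $g\cdot(x,\varrho)=(gxg^{-1},g_*\varrho)$, where $g_*\varrho$ is the pushforward along the induced automorphism $A_x\xrightarrow{\sim}A_{gxg^{-1}}$), and on $\Irr(\cW^{M^0})$ through $\pi_0(M)\to\Out(\cW^{M^0})$; and the Springer bijection (\ref{eqn:Springercor}) intertwines these actions. Thus for any $g\in \pi_0(M)$,
\[
g\cdot\tau(x,\varrho)=\tau(g\cdot x,g_*\varrho).
\]
The $(\supseteq)$ inclusion is then immediate: if $g\in(\pi_0(M)_x)_\varrho$, choose a lift in $\Cent_M(x)$ (available because $g\in\pi_0(M)_x$ means the $M^0$-orbit of $x$ is preserved); this lift fixes $\varrho$ by hypothesis, so equivariance gives $g\cdot\tau(x,\varrho)=\tau(x,\varrho)$. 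For $(\subseteq)$, if $g\cdot\tau(x,\varrho)=\tau(x,\varrho)$, then by injectivity of the Springer correspondence $(g\cdot x,g_*\varrho)$ is $M^0$-conjugate to $(x,\varrho)$; in particular $g\in\pi_0(M)_x$, and after absorbing an element of $M^0$ into a representative (which does not change the class in $\pi_0(M)$), we obtain $g_*\varrho\simeq\varrho$ as $A$-representations, i.e.\ $g\in(\pi_0(M)_x)_\varrho$.

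The main obstacle is checking the $M$-equivariance of the Springer correspondence (not merely the $M^0$-equivariance, which is automatic). For this I would appeal to the construction of the correspondence via $M^0$-equivariant perverse sheaves on the nilpotent cone of $LM^0$: outer conjugation by $\pi_0(M)$ permutes the simple objects in a way compatible with its permutation of nilpotent $M^0$-orbits and its induced action on $A_x=\pi_0(\Cent_{M^0}(x))$, and the bijection with $\Irr(\cW^{M^0})$ (where $\pi_0(M)$ acts through outer automorphisms of $\cW^{M^0}$ induced by its action on the Weyl group normalizer in $M$) respects this. The hypothesis $\varrho\in\cR_{\ttop}(A,\bB_x)$ ensures $\tau(x,\varrho)$ is indeed defined, so no degenerate case needs separate treatment.
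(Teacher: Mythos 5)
Your proof takes the same route as the paper: both hinge on the $\pi_0(M)$-equivariance of the Springer correspondence, namely the identity ${}^a\tau(x,\varrho)=\tau(axa^{-1},{}^a\varrho)$ for $a\in\pi_0(M)$ (Eqn.~(\ref{eqn: 22}) in the paper), followed by the identification $\pi_0(M)_x\simeq A^+/A$ from Lemma~\ref{lem:inertias}. The paper simply asserts this equivariance as Eqn.~(\ref{eqn: 22}) without further comment; your observation that this is ``the main obstacle,'' together with the sketch of a justification via $M^0$-equivariant perverse sheaves on the nilpotent cone, supplies detail the paper leaves implicit, and your treatment of the two inclusions in the left-hand equality is a careful unpacking of what the paper compresses into a single ``Hence.''
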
 
\begin{proof} 
We have
\begin{equation} \label{eqn: 22}
{}^a\tau(x,\varrho)=\tau(axa^{-1},{}^a\varrho)\quad\text{for any
$a\in\pi_0(M)$.}\end{equation}
Hence 
\[a\in \pi_0(M)_{\tau(x,\varrho)}\quad\Longleftrightarrow\quad
a\in \pi_0(M)_x\cap \pi_0(M)_\varrho.\]
In other words, we have
\begin{equation} \label{eqn: AA}
\pi_0(M)_{\tau(x,\varrho)}=(\pi_0(M)_x)_{\varrho}=(A^+/A)_\varrho,\end{equation}
by using Lemma~\ref{lem:inertias}.
\end{proof}

Recall the $p$-adic group $\cM(t)$, a predual of $M^0$.
Let $\cH(\cM(t))$ be its Iwahori-Hecke algebra.

The group of components $\pi_0(M)$ acts on both $\cM(t)$ and
$\Cset[\cW^{M^0}]$, and we can form the corresponding crossed product
algebras. 
Then, by applying Lemma~\ref{lem:Clifford_algebras}, we obtain:
\begin{align*} 
\Irr(\cH(\cM(t)) \rtimes \pi_0(M)) \simeq (\Irr \, \cH(\cM(t)) \q 
\pi_0(M))_2^{\square}\cr 
\Irr(\Cset[\cW^{M^0}] \rtimes \pi_0(M)) \simeq (\Irr \,\Cset[\cW^{M^0} 
\q \pi_0(M))_2^{\square}.
\end{align*}
Let
\begin{equation} \label{eqn:Hrc} 
\Irr(\cH(\cM(t)))^{\tempere}_{\reel}\,\subset
\Irr(\cH(\cM(t))\end{equation}
and
\begin{equation} \label{eqn:Hrcbis}
\Irr(\cH(\cM(t)) \rtimes \pi_0(M))^{\tempere}_{\reel}\,\subset
\Irr(\cH(\cM(t)) \rtimes \pi_0(M)) 
\end{equation}
denote the set of (equivalence classes) of real tempered irreducible 
representations of $\cH(\cM(t))$ and 
of $\cH(\cM(t)) \rtimes \pi_0(M)$, respectively. 

For $\varrho\in\cR_{\ttop}(A,\bB_x)$, let
$V(x,\varrho)\in \Irr(\cH(\cM(t)))^{\tempere}_{\reel}$ 
be the representation of $\cH(\cM(t))$ with Reeder parameter equal to 
the image of $(x,\varrho)$ by the bijection~{\rm (\ref{eqnarray:bij})}. 
Then the map $\tau(x,\varrho)\mapsto V(x,\varrho)$ provides a bijection
\[\Irr(\Cset[\cW^{M^0}]) \to \Irr(\cH(\cM(t)))^{\tempere}_{\reel}.\]
\begin{lem} \label{lem:HW}
Let $\varrho\in\cR_{\ttop}(A,\bB_x)$.
We have:
\begin{equation} \label{eqn:equalinertias1}
(\pi_0(M))_{V(x,\varrho)}=(\pi_0(M))_{\tau(x,\varrho)}
=(A^+/A)_\varrho,\end{equation}
and 
\begin{equation} \label{eqn:equalsquares1}
\square(V(x,\varrho))=\square(\tau(x,\varrho)),\end{equation}
where on the L.H.S. (resp. R.H.S.), $\square$ means the function  
which assigns an element in 
$\H^2((A^+/A)_\varrho,\Cset^\times)$ to $V(x,\varrho)$ (resp. 
$\tau(x,\varrho)$).

As a consequence, the map 
\begin{equation} \label{intermbij}
\tau(x,\varrho)\rtimes\psi\mapsto V(x,\varrho)\rtimes\psi
\end{equation}
is a bijection
\[\Irr(\Cset[\cW^{M^0}] \rtimes \pi_0(M)) \,\to\,\Irr(\cH(\cM(t)) \rtimes
\pi_0(M))^{\tempere}_{\reel}.\]
\end{lem}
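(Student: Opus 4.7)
My plan is to first verify the two numerical identities (\ref{eqn:equalinertias1}) and (\ref{eqn:equalsquares1}), and then deduce the bijection (\ref{intermbij}) formally from the setup of Section~\ref{sec: teq}. The middle equality of (\ref{eqn:equalinertias1}) is already known from Lemma~\ref{lem:inertias1}, so the real work lies in comparing the data attached to $V(x,\varrho)$ with that attached to $\tau(x,\varrho)$.

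The first step is to establish $\pi_0(M)$-equivariance of the bijection $\tau(x,\varrho)\mapsto V(x,\varrho)$. Both targets are determined canonically by $(x,\varrho)$: $\tau(x,\varrho)$ via the Springer correspondence for $\cW^{M^0}$, and $V(x,\varrho)$ via the Kazhdan--Lusztig--Reeder bijection (\ref{eqnarray:bij}) for the Iwahori--Hecke algebra $\cH(\cM(t))$. The group $\pi_0(M)$ acts on pairs $(x,\varrho)$ by ${}^a(x,\varrho)=(axa^{-1},{}^a\varrho)$, and by the canonicity of both correspondences this action intertwines with the natural $\pi_0(M)$-action on $M^0$-representations on either side. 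Consequently ${}^a\tau(x,\varrho)\simeq\tau(axa^{-1},{}^a\varrho)$ and ${}^aV(x,\varrho)\simeq V(axa^{-1},{}^a\varrho)$, so the two stabilizers coincide; combined with Lemma~\ref{lem:inertias1} this proves (\ref{eqn:equalinertias1}).

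The second step is the cocycle equality (\ref{eqn:equalsquares1}). For each $a\in(A^+/A)_\varrho$ I would fix an $A$-module intertwiner $S_a\colon\varrho\to{}^a\varrho$ and let $c(a,a')\in\Cset^\times$ be defined by $S_aS_{a'}=c(a,a')S_{aa'}$; the class $[c]\in\H^2((A^+/A)_\varrho;\Cset^\times)$ is the familiar obstruction to extending $\varrho$ to an ordinary representation of the preimage of $(A^+/A)_\varrho$ in $A^+$. The canonicity of the Springer correspondence lifts $S_a$ to an intertwiner $\tau(x,\varrho)\to{}^a\tau(x,\varrho)$ whose composition law computes $\square(\tau(x,\varrho))$, and symmetrically the canonicity of Reeder's parametrization lifts $S_a$ to an intertwiner $V(x,\varrho)\to{}^aV(x,\varrho)$ computing $\square(V(x,\varrho))$. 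Both cocycles therefore coincide with $[c]$, which gives (\ref{eqn:equalsquares1}).

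Finally, once (\ref{eqn:equalinertias1}) and (\ref{eqn:equalsquares1}) are in hand, the assignment $(\tau(x,\varrho),\psi)\mapsto(V(x,\varrho),\psi)$ defines a $\pi_0(M)$-equivariant bijection between the sets $\widetilde X_2^{\square}$ built over $X=\Irr(\Cset[\cW^{M^0}])$ and over $X=\Irr(\cH(\cM(t)))^{\tempere}_{\reel}$. Taking $\pi_0(M)$-quotients and applying Lemma~\ref{lem:Clifford_algebras} on each side yields the required bijection between $\Irr(\Cset[\cW^{M^0}]\rtimes\pi_0(M))$ and $\Irr(\cH(\cM(t))\rtimes\pi_0(M))^{\tempere}_{\reel}$. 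The main obstacle is the cocycle equality: although $\tau(x,\varrho)$ and $V(x,\varrho)$ arise from the same data $(x,\varrho)$, the intertwiners computing the respective cocycles act on a priori unrelated vector spaces, and one must argue that they can be chosen compatibly. The argument above relies on the functoriality of both parametrizations with respect to $A$-module isomorphisms of $\varrho$, a point that needs to be extracted carefully from \cite{R} and \cite{BM}.
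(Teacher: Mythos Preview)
Your argument for (\ref{eqn:equalinertias1}) is in the same spirit as the paper's, though the paper is more explicit about why ${}^aV(x,\varrho)\simeq V(axa^{-1},{}^a\varrho)$: rather than invoking an abstract canonicity of the Kazhdan--Lusztig bijection, it uses the $\cW^{M^0}$-module structure of $V(x,\varrho)$ from \cite{BM}, namely
\[
V(x,\varrho)|_{\cW^{M^0}}=\tau(x,\varrho)\oplus\bigoplus_{\tau(x,\varrho)<\tau(x_1,\varrho_1)}m_{(x_1,\varrho_1)}\tau(x_1,\varrho_1),
\]
so that $\tau(x,\varrho)$ is the unique minimal constituent for the closure order; twisting by $a$ and reading off this minimal constituent identifies the parameter of ${}^aV(x,\varrho)$.

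The real divergence is in (\ref{eqn:equalsquares1}). You route both cocycles through the $A$-module $\varrho$, claiming that a choice of intertwiners $S_a\colon\varrho\to{}^a\varrho$ lifts functorially to intertwiners on $\tau(x,\varrho)$ and on $V(x,\varrho)$, so that both $\square$-classes equal the class $[c]$ built from the $S_a$. As you yourself note, this functoriality is not obvious and would have to be extracted from the fine structure of the Springer and Kazhdan--Lusztig constructions. The paper bypasses this entirely: the multiplicity-one statement above gives $\dim\Hom_{\cW^{M^0}}(\tau,V)=1$, and the paper lets arbitrarily chosen intertwiners $T^\tau_{a^{-1}}$ and $T^V_{a^{-1}}$ act on this one-dimensional line via
\[
\Hom_{\cW^{M^0}}(\tau,V)\xrightarrow{T^V_{a^{-1}}}\Hom_{\cW^{M^0}}(\tau,{}^{a^{-1}}V)\xrightarrow{T^\tau_{a^{-1}}}\Hom_{\cW^{M^0}}({}^{a^{-1}}\tau,{}^{a^{-1}}V).
\]
The composite is a scalar, and renormalizing $T^V_{a^{-1}}$ so that this scalar is $1$ forces $c_V=c_\tau$. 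This argument never touches $\varrho$ and needs no functoriality statement beyond the multiplicity-one embedding $\tau\hookrightarrow V$; it resolves exactly the obstacle you flagged.
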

\begin{proof}
Let $\orb_{x}$ denote the nilpotent adjoint orbit which contains $x$.
Then the usual closure order on nilpotent adjoint orbits, which is 
defined as
\[ \orb_1\le\orb_2\quad\text{when $\orb_1\subset\overline{\orb_2}$},\]
induces the following partial order on the irreducible representations 
of $\cW^{M^0}$: 
\begin{equation} \label{eqn:ordering}
\tau(x_1,\varrho_1)\le\tau(x_2,\varrho_2)\quad\text{when
$\orb_{x_1}\le{\orb}_{x_2}$}.\end{equation}
In this partial order, due to our choice of the Springer correspondence
(see Remark~\ref{rem:Springer}), the trivial representation of $\cW^{M^0}$ 
is a minimal element and the sign representation is a maximal element.

The $\cW^{M^0}$-structure of $V(x,\varrho)$ is
\begin{equation} \label{eqn:Wstruct}
V(x,\varrho)|_{\cW^{M^0}}\,=\,\tau(x,\varrho)\,\oplus\,
\bigoplus_{(x_1,\varrho_1)\atop\tau(x,\varrho)<\tau(x,\varrho_1)}
m_{(x_1,\varrho_1)}\,\tau(x_1,\varrho_1),\end{equation}
where the $m_{(x_1,\varrho_1)}$ are non-negative integers. 
(In case where $M$ has connected centre, (\ref{eqn:Wstruct}) is implied by
\cite[Theorem~6.3~(1)]{BM}, the proof in the general case follows the
same lines.)
In particular, it follows from (\ref{eqn:Wstruct}) that
\begin{equation} \label{eqn:dim}
\dim_{\Cset}\Hom_{\cW^{M^0}}\left(\tau(x,\varrho),V(x,\varrho)\right)=1.
\end{equation}
Let $a\in \pi_0(M)$.
Eqn.~(\ref{eqn: 22}) implies that
\[{}^a V(x,\varrho)|_{\cW^{M^0}}\,=\,\tau(a \cdot x,{}^a\varrho)\,\oplus\,
\bigoplus_{(x_1,\varrho_1)\atop\tau(x,\varrho)\le\tau(x_1,\varrho_1)}
m_{(x_1,\varrho_1)}\,\tau(a\cdot x,{}^a\varrho_1).\]
Since $\tau(x,\varrho)\le\tau(x_1,\varrho_1)$ if and only if
$\tau(a\cdot x,{}^a\varrho)\le\tau(a\cdot  x_1,{}^a\varrho_1)$, it follows
that ${}^a V(x,\varrho)$ corresponds to the $M^0$-conjugacy class of
$(a\cdot x,{}^a\varrho)$ via the bijection induced by~(\ref{eqnarray:bij}).

Hence \[{}^a V(x,\varrho)\simeq V(x,\varrho)\quad\text{ if and only if }\quad
{}^a\tau(x,\varrho)\simeq\tau(x,\varrho).\] Hence Eqn.~(\ref{eqn:equalinertias1})
follows.

Now let $\left\{T_{a^{-1}}^V\,:\,a \in (A^+/A)_{\varrho}\right\}$ (resp.
$\left\{T_{a^-1}^\tau\,:\,a \in (A^+/A)_{\varrho}\right\}$) a family of
isomorphisms for $V=V(x,\varrho)$ (resp. $\tau=\tau(x,\varrho)$) which determines the
cocycle $c_V$ (resp. $c_\tau$).
We have
\[\Hom_{\cW^{M^0}}(\tau,V)\overset{{T_{a^{-1}}^V}}\longrightarrow
\Hom_{\cW^{M^0}}(\tau,{}^{a^{-1}}V)\overset{{T_{a^{-1}}^\tau}}\longrightarrow
\Hom_{\cW^{M^0}}({}^{{a}^{-1}}\tau,{}^{a^{-1}}V).\]
The composed map is given by a scalar, since
by Eqn.~(\ref{eqn:dim}) these spaces are one-dimensional. We normalize
the operator $T_{a^{-1}}^V$ so that this scalar equals to one. This forces $c_V$ and
$c_\tau$ to be equal. This implies 
\[\square(V(x,\varrho))=\square(
\tau(x,\varrho)).\]
\end{proof}

Define
$\Irr(\cH(H))_t$ to be the set of (equivalence classes) of 
irreducible representations of $\cH(H)$ that are parametrized by 
pairs $(\Phi,\rho)$ where $\Phi(\varpi_F,1)=t$. 

\begin{lem} \label{lem:end}
There a bijection
\[\Irr(\cM(t)\rtimes\pi_0(M))^{\tempere}_{\reel}\to\Irr(\cH(H))_t.\]
\end{lem}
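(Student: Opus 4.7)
The plan is to construct the bijection as a composition passing through Reeder parameters for the Iwahori--Hecke algebra $\cH(\cM(t))$ and the Kazhdan--Lusztig--Reeder parametrization for $\cH(H)$, glued together via Clifford theory for the $\pi_0(M)$-action. All the preparatory lemmas have been arranged so that this composition descends to a bijection on conjugacy classes.

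First I would apply Lemma~\ref{lem:Clifford_algebras} to $\cH(\cM(t))\rtimes\pi_0(M)$ and restrict to the real tempered part, so that every class in $\Irr(\cH(\cM(t))\rtimes\pi_0(M))^{\tempere}_{\reel}$ is represented by a pair $(V,\psi)$ with $V\in\Irr(\cH(\cM(t)))^{\tempere}_{\reel}$ and $\psi\in\Irr^{\square(V)}(\pi_0(M)_V)$. By Lemma~\ref{lem:HW} and the bijection~(\ref{eqnarray:bij}) of Reeder, $V=V(x,\varrho)$ is determined by an $M^0$-conjugacy class of pairs $(x,\varrho)$ with $x$ nilpotent in $LM^0$ and $\varrho\in\cR_{\ttop}(A,\bB_x)$, and by Lemma~\ref{lem:inertias1} the isotropy group $\pi_0(M)_{V(x,\varrho)}$ equals $(A^+/A)_\varrho$.

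Next I would combine $\varrho$ and $\psi$ into a single representation $\rho=\varrho\rtimes\psi$ of $A^+$ via Lemma~\ref{lem:inertias}(1), which also ensures (by part~(2), together with Lemmas~\ref{lem:bije}, \ref{lem:PX}, \ref{lem:UP}) that $\rho\in\cR(A^+,\bB^\Phi)$, where $\Phi$ is the $L$-parameter built from $(t,\hat\chi,\gamma)$ by Eqn.~(\ref{eqn:Phi}). By construction $\Phi(\varpi_F,1)=\hat\chi(1)\cdot t\cdot\gamma(1)=t$, so $(\Phi,\rho)$ is a Kazhdan--Lusztig--Reeder parameter with prescribed semisimple part $t$. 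Invoking the Kazhdan--Lusztig classification theorem (as used in \cite{R}), this parameter corresponds to a unique element of $\Irr(\cH(H))_t$, giving the desired map. The inverse is obtained by running the same identifications backwards: from a Kazhdan--Lusztig--Reeder parameter $(\Phi,\rho)$ with $\Phi(\varpi_F,1)=t$ one recovers the nilpotent class of $x$ (through the unipotent $u=\exp x$ commuting with $t$), then $\varrho$ and $\psi$ by restriction to $A$, and finally $V(x,\varrho)\rtimes\psi$.

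The main obstacle is showing that this composition is compatible with orbits, so that it is genuinely a bijection and not merely a surjection of pointed sets. The key input here is Reeder's \cite[Lemma~3.5.3]{R} (already used in the proof of Lemma~\ref{lem:inertias}), which identifies $H$-conjugacy classes of quadruples $(\ttt,\tu,\varrho,\psi)$ with $H$-conjugacy classes of triples $(t,u,\rho)$ via $\rho=\varrho\rtimes\psi$. Combined with Lemmas~\ref{lem:inertias1} and~\ref{lem:HW}, this precisely matches $\pi_0(M)$-orbits on the left with $H$-orbits on the right, yielding the bijection.
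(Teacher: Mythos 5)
Your argument is sound in outline and draws on the same ingredients as the paper's, but it sidesteps the one place where the paper does real technical work. You run the bijection purely at the level of parameter data: Clifford theory (Lemma~\ref{lem:Clifford_algebras}) plus Lemma~\ref{lem:HW} parametrize the left-hand side by triples $(x,\varrho,\psi)$; Lemma~\ref{lem:inertias} packages $\varrho\rtimes\psi$ into $\rho\in\cR(A^+,\bB^\Phi)$; the $L$-parameter $\Phi$ of Eqn.~(\ref{eqn:Phi}) records $t$; and Reeder's Lemma~3.5.3 matches orbits. The paper's proof instead works with the standard modules themselves: it invokes the presentation $\cH(H)=\cH(\tH)^Z$ (so that Kazhdan--Lusztig applies to $\tH$, which \emph{does} have simply connected derived subgroup), shows that the standard module
$M^\tH_{t,x,\varrho}=\Hom_{\Cent_\tH(t,x)}(\varrho,\H_*(\bB^\Upsilon,\Cset))$
coincides with
$M^{M^0}_{1,x,\varrho}=\Hom_{\Cent_{M^0}(x)}(\varrho,\H_*(\bB^\Xi,\Cset))$
using $\Cent_{M^0}(x)=\Cent_\tH(t,x)$ and the decomposition of $\H_*(\bB^\Upsilon,\Cset)$ from the proof of Lemma~\ref{lem:UP}, and then applies Ram--Ramagge \cite[Theorem~A.13]{RamRam} and \cite[(3.4.1)]{R} to the fixed-point algebra to obtain the parametrization of $\Irr(\cH(H))_t$ by triples $(x,\varrho,\psi)$ with $\psi$ a simple module of the twisted group algebra $\Cset[(A^+/A)_\varrho]_{\square[\varrho]}$. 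Your phrase ``invoking the Kazhdan--Lusztig classification theorem (as used in \cite{R})'' is where this is hidden: since $H$ need not have simply connected derived subgroup, the KL parametrization of $\Irr(\cH(H))_t$ is not a direct application of Kazhdan--Lusztig but is obtained exactly through the $\tH/Z$ isogeny, the standard-module comparison, and Clifford theory for $\cH(\tH)^Z$. Unpacking that one citation along the paper's lines would close the argument; as written, it is correct modulo accepting that black box, and it is the same strategy in a more compressed form.
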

\begin{proof}
Recall the group $\tH$ defined at the beginning of the proof of
Lemma~\ref{lem:inertias}: we have $H=\tH/Z$. 
Let $\cH(\tH)$ and $\cH(H)$ be the Iwahori-Hecke algebra of $\tH$ and $H$,
respectively. As in \cite[\S~1.5]{R}, we identify $\cH(H)$ with the set of
$Z$-fixed points of $\cH(\tH)$: 
\[\cH(H)=\cH(\tH)^Z.\]
The irreducible representations of $\cH(\tH)$ are parametrized by pairs
$(\Phi,\varrho)$ as above.

Let $\Irr(\cH(\tH))_t$ denote the set of (equivalence classes) of 
irreducible representations of $\cH(\tH)$ that are parametrized by 
pairs $(\Phi,\varrho)$ where $\Phi(\varpi_F,1)=t$. 
The construction of these representations is due Kazhdan and Lusztig \cite{KL}.
We will use the description given in \cite{CG}. We first recall
the standard modules 
\[M_{t,x,\varrho}^\tH:=\Hom_{\Cent_{\tH}(t,x)}(\varrho, \H_*(\bB^\Upsilon,\Cset)).\]
The similar construction of representations in $\Irr(\cH(\cM(t))_1$ leads
to the standard modules:
\[M_{1,x,\varrho}^{M^0}:=\Hom_{\Cent_{M^0}(x)}(\varrho,
\H_*(\bB^\Xi,\Cset)).\]
We have 
\[\Cent_{M^0}(x)=\Cent_{\tH}(t,x)\]
(\ie the centralizer in $M^0$ of $x$ coincides with the simultaneous 
centralizer in $\tH$ of $t$ and $x$), and (see the proof of \ref{lem:UP}):
\[
\H_*(\bB^{\Upsilon}, \Cset) = \H_*(\bB_1^{\Psi}, \Cset) \oplus \cdots
\oplus \H_*(\bB_m^{\Psi}, \Cset)
\]
is a sum of equivalent $A$-modules.
It follows that the irreducible representation of $\cH(\tH)$ attached to
$(t,x,\varrho)$ is \emph{isomorphic} to
the irreducible representation of $\cH(\cM(t))$ attached to
$(1,x,\varrho)$.

On the other hand, it follows from \cite[Theorem~A.13]{RamRam} (see also
\cite[Theorem~1.5.1]{R}) combined with \cite[(3.4.1)]{R} that the set of 
(equivalence classes) of
irreducible representations of $\cH(H)=\cH(\tH)^Z$ which are in 
$\Irr(\cH(\tH))_t$ are parametrized by
the triples $(x,\varrho,\psi)$ where $\psi$ runs over the simple modules of
the twisted group algebra
$\Cset[(A^+/A)_\varrho]_{\square[\varrho]}$.
\end{proof}
 
 \section{The Main Result} Let $\chi$ be a ramified character of the maximal 
torus $\cT \subset \cG$. We recall that
 \begin{align*}
 \fs & = [\cT,\chi]_{\cG}\\
 H & = \Cent_G(\im \hat{\chi})
 \end{align*}
 and $\Irr(\cH(H))_t$ is the set of (equivalence classes) of 
irreducible representations of the extended Iwahori-Hecke algebra 
$\cH(H)$ that are parametrized by pairs $(\Phi,\rho)$ where 
$\Phi(\varpi_F,1)=t$.   

We now assemble our results: 
 \begin{align}\label{fs0}
W^{\fs} & = \cW^H\\ \label{fs1}
(T\q \cW^{\fs})_2  & =  \{(t,\tau) : t \in T, \tau \in \Irr(\cW_t^H)\}/\cW^H\\ \label{fs2}
\cW^H_t & = \cW^{M^0} \rtimes \pi_0(M)\\ \label{fs23}
\Irr\, \cW^H_t  & =  (\Irr \, \cW^{M^0}\q \pi_0(M))_2^{\square}\\ \label{fs3}
\Irr(\Cset[\cW^{M^0}] \rtimes \pi_0(M)) & \simeq  \Irr(\cH(\cM(t)) \rtimes
\pi_0(M))^{\tempere}_{\reel}\end{align}
\begin{equation} 
\label{fs4} 
\Irr(\cH(\cM(t))\rtimes\pi_0(M))^{\tempere}_{\reel}  \simeq \Irr(\cH(H))_t
\end{equation}
The union of the sets $\Irr(\cH(H)_t$  
is in bijection with the set $\Irr(\cG)^{\fs}$, see \cite{Roc}:
\begin{align}\label{fs5}
\bigsqcup_{t\in T/\cW^H}\Irr(\cH(H))_t & \simeq \Irr(\cG)^{\fs}.
\end{align}

By combining Eqns.(\ref{fs0}, \ref{fs1}, \ref{fs2}, \ref{fs23}, \ref{fs3}, \ref{fs4}, \ref{fs5}), we create a  
canonical bijection between the extended quotient of the second kind $(T\q W^{\fs})_2$ and $\Irr(\cG)^{\fs}$:
 \begin{align}
\mu \colon (T\q W^{\fs})_2 \longrightarrow \Irr(\cG)^{\fs}, \quad \quad (t, x, \varrho, \psi) \mapsto (\Phi, \varrho \rtimes \psi).
\end{align}
Quite specifically, we have the following result.
\begin{thm} \label{thm:ps}  Let $\cG$ be a  split reductive $p$-adic group with connected centre and let $\Irr(\cG)^{\fs}$ be a Bernstein component in the principal series of $\cG$. Then there is a  canonical bijection 
\[
\mu^{\fs} : (T\q W^{\fs})_2 \to \Irr(\cG)^{\fs}
\]
where $\Irr(\cG)^{\fs}$ is parametrized by Kazhdan-Lusztig-Reeder parameters, and $(T\q W^{\fs})_2$ is the extended quotient of the second kind.
\end{thm}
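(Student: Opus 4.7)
The plan is to construct $\mu^\fs$ as the composition of the bijections already assembled in equations~(\ref{fs0})--(\ref{fs5}), then verify that the resulting map is canonical and agrees with the Kazhdan--Lusztig--Reeder parametrization. A point of $(T\q W^\fs)_2$ will be represented by a $\cW^H$-orbit of a quadruple $(t,x,\varrho,\psi)$: first choose a semisimple $t \in T$, and then use Lemma~\ref{lem:centrals} and Lemma~\ref{lem:Roche} to write $W^\fs_t = \cW^{M^0} \rtimes \pi_0(M)$ with $M = \Cent_H(t)$. By Eqn.~(\ref{EXT2}) any $\tau \in \Irr(W^\fs_t)$ decomposes as $\tau(x,\varrho) \rtimes \psi$, where $(x,\varrho)$ is a Springer parameter via~(\ref{eqn:Springercor}) and $\psi \in \Irr^{\square(\tau(x,\varrho))}(\pi_0(M)_{\tau(x,\varrho)})$.

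Next I would chain the following four bijections. First, the Springer correspondence, inflated by Clifford theory (Lemma~\ref{lem:Clifford}), identifies an orbit of $(t,x,\varrho,\psi)$ with an element of $\Irr(\Cset[\cW^{M^0}] \rtimes \pi_0(M))$. Second, Lemma~\ref{lem:HW} upgrades this to $\Irr(\cH(\cM(t)) \rtimes \pi_0(M))^{\tempere}_{\reel}$ via $\tau(x,\varrho) \rtimes \psi \mapsto V(x,\varrho) \rtimes \psi$; the crucial input here is the equality of inertia groups (\ref{eqn:equalinertias1}) and of cocycles (\ref{eqn:equalsquares1}), which ensures that the twisted extended quotient structure on both sides matches. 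Third, Lemma~\ref{lem:end} provides the passage to $\Irr(\cH(H))_t$, realized through the identification $\cH(H) = \cH(\tH)^Z$ and the coincidence $\Cent_{M^0}(x) = \Cent_{\tH}(t,x)$ of centralizers. Finally, Roche's theorem gives the bijection $\bigsqcup_{t \in T/\cW^H} \Irr(\cH(H))_t \simeq \Irr(\cG)^\fs$, which requires the connected-centre and residual-characteristic hypotheses.

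It remains to verify that the composition descends to $\cW^H$-orbits, i.e.\ that conjugating $t$ by $w \in \cW^H$ produces the same image in $\Irr(\cG)^\fs$. For this I would use the equivariance of each step: the Springer correspondence is functorial, the $V(x,\varrho)$ construction is conjugation-equivariant by Eqn.~(\ref{eqn: 22}), and the Roche bijection is canonical across the $\cW^H$-orbit of $t$. Explicitly, the final map sends the orbit $[(t,x,\varrho,\psi)]$ to the irreducible $\cG$-representation parametrized by the Kazhdan--Lusztig--Reeder pair $(\Phi,\rho)$ with $\Phi$ given by Eqn.~(\ref{eqn:Phi}) and $\rho = \varrho \rtimes \psi$; the fact that $\rho$ really is an irreducible of $A^+$ occurring in $\H_*(\bB^\Phi,\Cset)$ is precisely Lemma~\ref{lem:inertias}(2).

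The main obstacle is not any single hard step but rather the coherence of all the identifications of inertia groups, cocycles and homology representations across the Weyl-group side, the affine-Hecke-algebra side, and the Langlands-parameter side. In particular, Lemma~\ref{lem:HW}, which transfers the Clifford-theoretic data $(\pi_0(M)_{\tau(x,\varrho)},\square)$ from the Springer representation $\tau(x,\varrho)$ to the tempered Hecke module $V(x,\varrho)$, is the technical heart of the argument. Once that is in place, composing the bijections yields a canonical $\mu^\fs$ whose compatibility with Kazhdan--Lusztig--Reeder parameters is built into the construction of $\Phi$ and $\rho$ via Eqns.~(\ref{eqn:Phi}) and~(\ref{eqnarray:bij}).
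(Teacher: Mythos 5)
Your proposal is correct and follows essentially the same route as the paper: assembling the chain of bijections (\ref{fs0})--(\ref{fs5}), with Lemma~\ref{lem:HW} supplying the matching of Clifford data between $\Cset[\cW^{M^0}]\rtimes\pi_0(M)$ and $\cH(\cM(t))\rtimes\pi_0(M)$, Lemma~\ref{lem:end} passing to $\Irr(\cH(H))_t$, and Roche's theorem landing in $\Irr(\cG)^\fs$. Your explicit verification of $\cW^H$-equivariance and your identification of Lemma~\ref{lem:inertias}(2) as the step that realizes $\rho=\varrho\rtimes\psi$ as a Kazhdan--Lusztig--Reeder parameter are points the paper leaves implicit but are entirely in the spirit of its argument.
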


This in turn creates a bijection 
 \begin{align}\label{bijection}
T\q W^{\fs} \longrightarrow \Irr(\cG)^{\fs}.
\end{align}
This bijection is not canonical in general, depending as it does
on a $c$-$\Irr$ system.
When $G = \GL(n)$, the finite group $\cW^H_t$  is a product of
symmetric groups: in this case there is a canonical  $c$-$\Irr$ system,    according to the classical theory of Young tableaux. 

\smallskip

\section{Correcting cocharacters} \label{sec:corcoc}
Let $\fs$ be a point in the Bernstein spectrum for the principal series of $\cG$. 
We will construct, for each $\alpha \in \Cset^{\times}$,  a  commutative diagram:
\begin{equation} \label{eqn:CD}
\begin{CD}
(T\q W^{\fs})_2 @> \mu^{\fs} >>  \Irr(\cG)^{\fs}\\
@ V \pi_{\alpha} VV        @VV i_{\alpha} V\\
T/W^{\fs} @=  T/W^{\fs}
\end{CD}
\end{equation}
The map $\mu^{\fs}$ is bijective and canonical, by Theorem (\ref{thm:ps}).   In this diagram, only the vertical maps depend on $\alpha$.
 
\medskip

We start by defining the vertical maps  in the diagram.  Let 
$\alpha \in \Cset^{\times}$.  Recall
the following matrix in $\SL(2,\Cset)$:
\[
Y_{\alpha} = 
\left(\begin{array}{cc}
\alpha & 0\\
0 & \alpha^{-1}\end{array}\right)
\]
and  set, as in Eqn.(\ref{hh}), 
\begin{equation} \label{eqn:defncochar}
h_{\gamma}\colon \Cset^{\times} \to T, \quad \quad \alpha \mapsto  \gamma(Y_{\alpha}).
\end{equation}
Then $h_{\gamma}$ is an element in the  \emph{coweight lattice} $X_{\bullet}(T)$,  also called  the \emph{cocharacter lattice}.
\medskip

Let $(\Phi,\rho) \in \Irr(\cG)^{\fs}$  be the Reeder parameter which corresponds, via $\mu^{\fs}$,  to the point $(t,\tau) \in (T\q W^{\fs})_2$.
We will define
\begin{align}
i_{\alpha}\colon \Irr(\cG)^{\fs}  \to T/W^{\fs}, \quad  (\Phi, \rho) \mapsto  \Phi (\varpi_F, Y_{\alpha})
\end{align}
\begin{align}
\pi_{\alpha} \colon (T\q W^{\fs})_2  \to T/W^{\fs}, \quad (t,\tau) \mapsto t \cdot \gamma (Y_{\alpha})
\end{align}
We note that 
\[
 \Phi (\varpi_F, Y_{\alpha}) = t \cdot \gamma(Y_{\alpha})
\]
by the definition (\ref{Phi}) of the $L$-parameter $\Phi$, so that the diagram is commutative.



\begin{prop} \label{prop:cocharcell}
The cocharacter $h_\gamma$ defined in Eqn.~{\rm (\ref{eqn:defncochar})}
depends only on the unipotent class in $H$ containing $\exp x$.
\end{prop}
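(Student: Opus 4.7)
My plan is to reduce the statement directly to Lemma~\ref{lem:cocharindep}, which already gives that $h_\gamma$ depends only on the $H$-conjugacy class of the nilpotent element $x$ inside $LH$. Granted that reduction, it suffices to show that the $H$-orbit of $x$ in the nilpotent cone of $LH$ is determined by, and determines, the $H$-conjugacy class of $u:=\exp x$ in the unipotent variety of $H$.

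The key input is the classical fact, valid because we work over $\Cset$ with $H$ a connected complex reductive group, that the exponential map restricts to an $\Ad(H)$-equivariant algebraic isomorphism from the nilpotent cone $\mathcal{N}(LH)$ onto the unipotent variety $\mathcal{U}(H)$, where $H$ acts on the source via the adjoint representation and on the target by conjugation. Passing to orbits, this induces a bijection between nilpotent adjoint $H$-orbits in $LH$ and unipotent conjugacy classes in $H$. Concretely, if $\exp x$ and $\exp x'$ lie in the same $H$-conjugacy class, then for some $h\in H$ we have
\[
\exp(\Ad(h)x)=h(\exp x)h^{-1}=\exp x',
\]
and injectivity of $\exp$ on $\mathcal{N}(LH)$ forces $\Ad(h)x=x'$, so $x$ and $x'$ are $H$-conjugate as nilpotent elements. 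The converse implication is immediate from the $\Ad(H)$-equivariance of $\exp$.

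Combining this orbit-level bijection with Lemma~\ref{lem:cocharindep} now gives the proposition: the nilpotent class of $x$ in $H$ is the same invariant as the unipotent class of $\exp x$ in $H$, and $h_\gamma$ depends only on the former. I do not expect any real obstacle; the proposition is essentially a rephrasing of Lemma~\ref{lem:cocharindep} through the classical nilpotent/unipotent correspondence in characteristic zero. The only point where one must be slightly careful is in checking that the bijectivity of $\exp$ is applied to the correct group, namely $H$, which is ensured by Lemma~\ref{lem:cocharindep} allowing us to view $\gamma$ as taking values in $H$ (through $M^0 \subset H$).
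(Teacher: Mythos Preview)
Your proposal is correct and follows exactly the paper's approach: the paper's entire proof is the single line ``See Lemma~\ref{lem:cocharindep}.'' You have simply made explicit the standard characteristic-zero identification between nilpotent $H$-orbits and unipotent $H$-classes via the $\Ad(H)$-equivariant exponential map, a step the paper leaves tacit in that bare citation.
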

\begin{proof} See Lemma \ref{lem:cocharindep}.
\end{proof}

\smallskip

In view of the Lusztig bijection between unipotent classes in $H$ and 
two-sided cells in $X(T)\rtimes W^\fs$,
we may set
\begin{equation} \label{eqn:hc}
h_{\bc} = h_{\gamma},
\end{equation}
where $\bc$ is the two-sided cell which corresponds to the unipotent class
in $H$ containing $\exp x$.

\smallskip

 For the \emph{restriction} of $\pi_{\alpha}$ to  $(T\q W^{\fs})^{\bc}_2$  we therefore have
\[
 (T\q W^{\fs})^{\bc}_2 \to T/W^{\fs}, \quad \quad (t,\tau) \mapsto  t\cdot h_{\bc}(\alpha)
\]
 We have created a system of correcting  cocharacters as defined in \cite[p.131]{ABP3}.
 \medskip

The correspondence $\nu \mapsto \chi_{\nu}$ between points in $T$ and unramified quasicharacters of $\mathcal{T}$ can be fixed by the relation \[\chi_{\nu}(\lambda(\varpi_F)) = \lambda(\nu)\]where $\varpi_F$ is a uniformizer in $F$, and 
$\lambda \in X_{\bullet}(\mathcal{T}) = X(T)$.   

Let $X$ denote the rational cocharacter group of $\cT$,
identified with the rational character group of $T$ . Let $\cT_0$ be the maximal compact
subgroup of $\cT$ . A choice of uniformizer in $F$ gives a splitting $\cT = \cT_0 \times X$, so
characters of $\cT$ have the form $\chi \otimes \nu$, where $\chi$ is a character of $\cT_0$, and $\nu \in T$.  As in \S6, 
$\chi\otimes \nu$  gives rise to a homomorphism $\hat{\chi} : U_F \to T$.  This homomorphism is independent of $\nu$. \emph{The notation} $\hat{\chi}$ 
\emph{is consistent with the notation in} \S6.

Let  $q = q_F$ denote the cardinality of the residue field $k_F$ of $F$.  
\medskip

\textsc{Notation}.  Here is the dictionary for translation between Reeder's notation \cite[p.117]{R} and ours:   $s = t,\, \tau = \nu,\,\phi_u = \gamma, \,
\tau_u = \gamma(Y_{\sqrt q}), \, \tau = \tau_u \cdot s =   \gamma(Y_{\sqrt q})\cdot t, \, \hat{\chi}_s(u\varpi_F) = \hat{\chi}(u)\cdot t$.     
The definition of $\Phi$ in \cite[p.117]{R} therefore coincides with our definition (\ref{Phi}).
\medskip

Now consider the induced representation
\[
\Ind_{\mathcal{B}}^{\mathcal{G}}(\chi \otimes \nu),
\]
where $\cB\supset\cT$ is the standard Borel subgroup in $\cG$, and
$\chi\in\Irr(\cT)$.
 
Recall the \emph{infinitesimal character} \cite[\S2.1]{BDK}, also called the \emph{cuspidal support} map $\mathbf{Sc}$ \cite[VI.7.1.1]{Renard}:
\[
\mathbf{Sc}:  \Irr(\cG) \longrightarrow \Omega(\cG)
\]
where $\Omega(\cG)$ is the set of all $\cG$-conjugacy classes of cuspidal pairs.  

Let $\Irr_{\chi}(\cG, \cB)$  denote the set of irreducible representations of $\cG$, up to equivalence, which appear in $\Ind^{\cG}_{\cB}(\chi \otimes \nu)$, for some  $\nu \in T$, as in \cite[p.120]{R}.   
Then $\Irr_{\chi}(\cG, \cB)$ is in bijection with  the set of $G$-conjugacy classes
of pairs $(\Phi, \rho)$, where  $\Phi \colon W_F \times \SL(2, \Cset) \to G$ is a Langlands parameter such that
$\bB^{\Phi}$ is nonempty, $\Phi|_{U_F}$	is $G$-conjugate to $\hat{\chi}$, and $\rho \in \mathcal{R}(A_{\Phi}, \bB^{\Phi})$.  See \cite[4.5, 5.3d]{R}.
The cuspidal support of the irreducible representation of $\cG$ with Reeder parameter $(\Phi, \rho)$ is therefore the cuspidal pair $(\cT, \chi \otimes \nu)$. 

We fix $\cT$ and $\chi$ and allow $\nu$ to vary.   For the cuspidal pairs of the form $(\cT, \chi \otimes \nu)$, we have the identification, as in \cite[2.1]{BDK}:
\begin{align}\label{identify}
\{(\cT,\chi \otimes \nu) \colon \nu \in T\} \simeq T, \quad \quad (\cT,\chi \otimes \nu) \mapsto \nu.
\end{align}
This is how the quotient variety $T/W^{\fs}$ arises as a \emph{component} in the Bernstein variety $\Omega(\cG)$, see \cite[2.1]{BDK}.
 
Note that
\begin{equation} \label{eqn:iq}
i_{\sqrt q}\colon \Irr(\cG)^{\fs} \to T/W^{\fs}, \quad \quad (\Phi,\rho) \mapsto \Phi(\varpi_F, Y_{\sqrt q})
\end{equation}
and
\[
\Phi(\varpi_F, Y_{\sqrt q}) = t \cdot \gamma(Y_{\sqrt q}) = \nu 
\]
which is now identified by (\ref{identify}) with the cuspidal pair $(\cT, \chi \otimes \nu)$.  We infer that
\[
i_{\sqrt q}: \Irr(\cG)^{\fs} \to T/W^{\fs}, \quad \quad (\Phi,\rho) \mapsto (\cT, \chi \otimes \nu)
\]
so that 
\begin{align}
i_{\sqrt q} =\mathbf{Sc}.
\end{align}

This creates the commutative diagram
\[
\begin{CD}
(T\q W^{\fs})_2 @> \mu^{\fs} >>  \Irr(\cG)^{\fs}\\
@ V \pi_{\sqrt q} VV        @VV \mathbf{Sc} V\\
T/W^{\fs} @=  T/W^{\fs}
\end{CD}
\]
so that 
\begin{align}
\pi_{\sqrt q} = \mathbf{Sc} \circ \mu^{\fs}.
\end{align}
 This confirms our geometric conjecture, as stated in \cite{ABP3}, for the principal series of $\cG$.

Since $\mu^{\fs}$ is bijective and the diagram is commutative,  the number of points 
in the fibre of $\pi_{\sqrt q}$ equals the number of inequivalent irreducible constituents of
$\Ind_{\mathcal{B}}^{\mathcal{G}}(\chi \otimes \nu)$:
\begin{align}\label{card}
|\pi^{-1}_{\sqrt q}(\chi \otimes \nu)| = |\Ind_{\cB}^{\cG}(\chi \otimes \nu) |
\end{align}
The map $\pi_{\sqrt q}$, a finite morphism of algebraic varieties,  is therefore a model of the map  $\mathbf{Sc}$.   The predictive power of Eqn.(\ref{card}) is illustrated by  Example 2 in \S13. 

\section{Intersections of $L$-packets with $\Irr(\cG)^\fs$} \label{sec:Lpackets}
In \cite[\S10]{ABP4}, we raise the following question: ``In the ABP view of the smooth dual $\Irr(\cG)$, 
what are the $L$-packets?''


In the conjectural answer \cite[\S~10]{ABP4}, the essential point was 
that in the list of correcting
cocharacters $h_1$, $h_2$, $\ldots$, $h_r$, there may
be repetitions, i.e. it may happen that for some $i, j$ with
$1\le i < j \le r$, one has $h_i=h_j$, and these repetitions give rise to
$L$-packets. We find here that this is close to the truth: for a given
$L$-packet all the $h_i$ have to be the same, however one cocharacter
 $h_i$ may correspond to two distinct $L$-packets. The correct answer
in the principal series case is given by the following theorem.

\smallskip

We recall the cell decomposition
\[(T\q W^\fs)_2=\bigsqcup (T\q W^\fs)_2^\bc.
\]

\begin{thm} \label{thm:Lpackets}  
Two points $(t, \tau)$ and $(t^{\prime}, \tau^{\prime})$ in
$(T\q W^\fs)_2$ have
\[
\text{$\mu^\fs(t, \tau)$ and $\mu^\fs(t',\tau^{\prime})$ in the
same $L$-packet} 
\]
if and only
\[\text{$(t, \tau)$ and $(t^{\prime}, \tau^{\prime})$ belong to the same
$(T\q W^\fs)_2^\bc$}\]
  and
\[
\text{$\pi_{\alpha}(t,\tau) =
\pi_{\alpha}(t^{\prime},\tau^{\prime})$, for each $\alpha\in\Cset^\times$.}\]
\end{thm}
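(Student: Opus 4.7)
The plan is to convert ``same $L$-packet'' into the two stated conditions by unpacking the definition of $\Phi$ in Eqn.~(\ref{Phi}). Two Reeder parameters $(\Phi,\rho)$ and $(\Phi',\rho')$ index representations in the same $L$-packet exactly when $\Phi$ and $\Phi'$ are $G$-conjugate; because both arise from the same $\fs$, they agree on $U_F$ (both restrict to $\hchi$), so any conjugating element centralises $\im\hchi$ and hence lies in $H$. Thus I need to show that the pairs $(t,\gamma)$ and $(t',\gamma')$ are simultaneously $H$-conjugate if and only if $(t,\tau)$ and $(t',\tau')$ lie in the same cell component $(T\q W^{\fs})_2^{\bc}$ and $\pi_\alpha(t,\tau)=\pi_\alpha(t',\tau')$ for every $\alpha\in\Cset^\times$.

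The forward direction is quick: if $h\in H$ takes $(t,\gamma)$ to $(t',\gamma')$ then it sends $x=d\gamma(e_0)$ to $x'=d\gamma'(e_0)$, so $\exp x$ and $\exp x'$ lie in the same unipotent class of $H$, which is the cell condition via the Lusztig bijection; moreover $t'\,h_{\gamma'}(\alpha)=h(t\,h_\gamma(\alpha))h^{-1}$ is $H$-conjugate in $T$ to $t\,h_\gamma(\alpha)$, and two $H$-conjugate elements of $T$ are automatically $\cW^H=W^\fs$-conjugate (adjust $h$ by an element of the connected centraliser of $t'\,h_{\gamma'}(\alpha)$ so that the product normalises $T$), giving the $\pi_\alpha$ equality. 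For the converse, the $\pi_\alpha$-hypothesis furnishes, for each $\alpha$, some $w_\alpha\in W^\fs$ with $w_\alpha(t\,h_\gamma(\alpha))=t'\,h_{\gamma'}(\alpha)$; writing $\Cset^\times$ as the finite union over $w\in W^\fs$ of the algebraic subsets $\{\alpha:w(t\,h_\gamma(\alpha))=t'\,h_{\gamma'}(\alpha)\}$, each of which is either finite or all of $\Cset^\times$, forces a uniform $w\in W^\fs$ valid for every $\alpha$. Specialising to $\alpha=1$ (where $h_\gamma(1)=1$) gives $wtw^{-1}=t'$, and the remaining identity then gives $wh_\gamma w^{-1}=h_{\gamma'}$ as cocharacters. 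Lifting $w$ to $n_w\in\Nor_H(T)$ and replacing $(t',\gamma')$ by $n_w^{-1}(t',\gamma')n_w$, I reduce to the case $t=t'$, $h_\gamma=h_{\gamma'}=:h$, with $\exp x,\exp x'$ still in the same $H$-unipotent class.

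In this reduced situation, Jantzen's theorem (as invoked in the proof of Lemma~\ref{lem:cocharindep}) makes $h$ the $M^0$-cocharacter associated to both $x$ and $x'$, so both nilpotents lie in the weight-$2$ subspace of $LM^0$ under $\Ad\circ h$. The Jacobson--Morozov/Kostant theory identifies the set of nilpotents whose associated cocharacter is exactly $h$ as a single $\Cent_{M^0}(h)^0$-orbit, so there exists $m\in\Cent_{M^0}(h)^0\subset M$ with $\Ad(m)x=x'$. Then $m\gamma m^{-1}$ is an optimal $\SL_2$-homomorphism for $x'$ whose cocharacter is $mhm^{-1}=h=h_{\gamma'}$, so the uniqueness statement of McNinch (\cite[Prop.~42]{McN}, cited just above Lemma~\ref{lem:cocharindep}) yields $m\gamma m^{-1}=\gamma'$; since $m\in M\subset H$ and $m$ centralises $t=t'$, this provides the desired $H$-conjugacy of $(t,\gamma)$ with $(t',\gamma')$. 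The main obstacle is the Zariski/pigeonhole step that extracts a uniform $w\in W^\fs$ from the family $(w_\alpha)$, together with the correct deployment of the cocharacter-rigidity theorem for nilpotent orbits; the components $\tau,\tau'$ play no role in the conditions, reflecting precisely the fact that representations in the same $L$-packet share the $\Phi$-part of the Reeder parameter while the $\rho$-part is free to vary.
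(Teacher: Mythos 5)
Your argument is correct and reaches the stated equivalence by the same basic decomposition as the paper's proof (fixed restriction to $U_F$, cell datum for the $\SL(2,\Cset)$-part, $\pi_\alpha$ for the value at $(\varpi_F,Y_\alpha)$), but it supplies two substantive ingredients that the paper's proof leaves implicit. In the backward direction you pass from the $\alpha$-dependent witnesses $w_\alpha\in W^\fs$ to a uniform $w$ by writing $\Cset^\times$ as a finite union of Zariski-closed loci $S_w=\{\alpha:\;w(t\,h_\gamma(\alpha))=t'\,h_{\gamma'}(\alpha)\}$ and invoking irreducibility of $\Cset^\times$; setting $\alpha=1$ then yields $wtw^{-1}=t'$ and consequently $w\,h_\gamma\,w^{-1}=h_{\gamma'}$. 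After conjugating by a lift of $w$ you invoke the fact that the nilpotents in $LM^0$ with associated cocharacter exactly $h$ form the open dense $\Cent_{M^0}(h)^0$-orbit in the weight-$2$ subspace, so an element $m\in M$ centralizing $t$ carries $x$ to $x'$; McNinch's uniqueness of optimal $\SL_2$-homomorphisms then forces $m\gamma m^{-1}=\gamma'$, establishing the $H$-conjugacy of $(t,\gamma)$ with $(t',\gamma')$. The paper's proof stops at matching the three displayed pieces of data and does not spell out why this pins down the $H$-conjugacy class; your proof does, and both of the extra lemmas you call on are genuinely needed. A small consequence worth recording: your argument shows that the $\pi_\alpha$-condition for all $\alpha\in\Cset^\times$ already forces $h_\gamma$ and $h_{\gamma'}$ to be $W^\fs$-conjugate, hence $\exp x$ and $\exp x'$ to lie in the same $H$-unipotent class, so the cell condition in the theorem's hypothesis is logically implied by the $\pi_\alpha$-condition (it remains useful to keep it explicit, both for readability and because the forward direction verifies it independently).
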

\begin{proof}
Let $(t, \tau)$ and $(t^{\prime}, \tau^{\prime})$ two points in
$(T\q W^\fs)_2$. Their images by the bijection $\mu^\fs$ are in the same
$L$-packet if and only if they correspond to the same of 
$L$-parameter.

Recall the definition of the $L$-parameter $\Phi$ (see Eqn.~(\ref{eqn:Phi})): 
\[\Phi \colon F^{\times} \times \SL(2,\Cset) \to G, \quad\quad
(u\varpi_F^n,Y) 
\mapsto \hchi(u)\cdot t^n\cdot \gamma(Y),\]
for all  $u \in U_F$, $n \in\Zset$, $Y \in \SL(2,\Cset)$.

Here $\hat\chi$ is a homomorphism from $U_F$ to $T$ provided by 
$\chi\otimes\nu$, and, as we have already observed, it is independent of
$\nu\in T$. In other words, $\hat\chi$ only depends on $\fs=[\cT,\chi]_\cG$. 
Since the inertial pair $\fs$ is fixed, it follows that the restriction of 
$\Phi$ to ${U_F}$ is fixed too.  

Now we have seen in the proof of Proposition~\ref{prop:cocharcell} that 
$\gamma$ depends on a unipotent class $\cU$ in $H$. Let $\bc$ be the
two-sided cell in $X(T)\rtimes W^\fs$ which corresponds to $\cU$  
by the Lusztig bijection.
 
Set \[(\Phi,\rho):=\mu^\fs(t,\tau)\quad\text{and}\quad
(\Phi',\rho'):=\mu^\fs(t',\tau').\]
Then $(t,\tau)$ and $(t',\tau')$ are in the same cell $(T\q W^\fs)_2^\bc$
if and only if $\Phi$ and $\Phi'$ have same restriction to $\SL(2,\Cset)$.
 
Finally, from the commutativity of the diagram~(\ref{eqn:CD}), we have
\[\Phi(\varpi_F,Y_{\alpha})=t\cdot \gamma(Y_{\alpha})=t\cdot
h_\gamma(Y_{\alpha})=\pi_{\alpha}(t,\tau),\]
and similarly
\[\Phi'(\varpi_F,Y_{\alpha})=\pi_{\alpha}(t',\tau').\]
The result follows.

Finally, recall the definition of the correcting cocharacter attached to $\bc$, from
Eqns.~(\ref{eqn:defncochar}) and (\ref{eqn:hc}):
\[h_{\bc}\colon \Cset^{\times} \to T, \quad \quad \alpha \mapsto
\gamma(Y_{\alpha}).\]
Hence all the $\mu^\fs(t,\tau)$ in a given $L$-packet correspond to the
same cocharacter. 
\end{proof}

 \section{Examples}

\textsc{Example~1.}  \emph{Realization of the ordinary quotient}  $T/W^\fs$.    
Let $\fP(G)$ denote the set of $G$-conjugacy classes
of Langlands parameters \[\Phi \colon W_F \times \SL(2, \Cset) \to
G\] such that
$\bB^{\Phi}$ is nonempty and $\Phi|_{U_F}$ is $G$-conjugate to $\hat{\chi}$.
Consider an $L$-parameter $\Phi\in \fP(G)$ for which 
$\Phi | _{\SL(2,\Cset)} = 1$ (hence $\Phi \colon W_F \to G$).
Let $t = \Phi(\varpi_F)$. 

Now $t$ is a semisimple element in $G$, and all such semisimple elements arise.  
Modulo conjugacy in $G$, the set of such $L$-parameters $\Phi$ is  
parametrized by the quotient $T/W^\fs$.  Explicitly, let
\[
\fP_1(G): = \{\Phi\in\fP(G) \,:\,   \Phi |_{ \SL(2,\Cset)} = 1\}.
\]  
   Then we have a canonical bijection
\[
\fP_1(G) \to T/W^\fs, \quad \quad \Phi \mapsto
\Phi(\varpi_F,1)
\]
which fits into the commutative diagram
\[
\begin{CD}
\fP_1(G) @>>> T/W^\fs \\
@VVV              @VVV   \\
\fP(G)  @>>> (T \q W^\fs)_2
\end{CD}
\]
where the vertical maps are inclusions. 

\smallskip

From the construction of the system of correcting cocharacters, it follows
that $\fP_1(G)$ is attached to exactly one cocharacter $h_\bc$, the one
which is indexed by the two-sided cell $\bc$ corresponding to the trivial
unipotent class in $H$. Hence $\bc$ equals $\bc_0$: the \emph{lowest} 
two-sided cell in $X(T)\rtimes W^\fs$. 

Recall the corresponding part $(T\q W^{\fs})_2^{\bc_0}$ (see 
Eqn.~(\ref{eqn:T2cbis})) of the extended quotient of the second kind. 
Then we get
\begin{equation}
\label{eqn:TW2c0}
T/W^\fs\subset (T\q W^{\fs})_2^{\bc_0}.\end{equation}
By using the commutative diagram~(\ref{eqn:CDnu}), we obtain,
as conjectured in \cite[p.~87]{ABP4}:
\begin{equation}
\label{eqn:TWc0}
T/W^\fs\subset (T\q W^{\fs})^{\bc_0},\end{equation}
where $(T\q W^{\fs})^{\bc_0}$ is the part corresponding to the lowest 
two-sided cell $\bc_0$ in the cell decomposition~(\ref{eqn:cellddec}) of 
the extended quotient $T\q W^{\fs}$.

\smallskip

On the other hand, the corresponding group $\Cent_{M^0}(x)=M^0$ (here $x=1$)
is connected and acts trivially in homology. Therefore $\varrho$ is the unit
representation $1$. 
Then Eqn.~(\ref{eqn:T2c}) gives:
\begin{equation}
\widetilde T_2^{\bc_0}=\{(t,\psi)\in \widetilde
T_2\,:\,\psi\in\pi_0(M)\}.\end{equation}

\bigskip

\textsc{Example $2$.}   An example in the principal series of $\SL(4,\Qset_2)$.   Note that, for this group, the centre is not connected. The multiplicative group of the field $\Qset_2$ is given by
\[
\Qset_2^{\times} \simeq \Zset \times \Zset_2 \times \Zset/2\Zset
\]
and so $\Qset_2^{\times}$ admits three ramified characters of order $2$. They may be written $\eta,\, \chi,\, \eta \cdot \chi$.
\smallskip

The three ramified quadratic characters of $\Qset_2^{\times}$ create a unitary character, of order $2$,  of the standard Borel subgroup $\cB \subset \SL(4, \Qset_2)$:
\[
\tau : \left[
\begin{array}{cccc}
x_1 & * & * & *\\
0 & x_2 & * & *\\
0 & 0 & x_3 & *\\
0 & 0 & 0 & x_4
\end{array}
\right] \mapsto \eta(x_2)\chi(x_3)(\eta \cdot \chi)(x_4)
\]
 
  We twist the character $\tau$ by an unramified unitary character  $\psi$  and form the induced representation
$\Ind_{\cB}^{\cG}(\psi \tau)$.    Let $\Psi^1(\mathcal{T})$ denote the group of unramified unitary characters of the maximal torus $\cT \subset \SL(4)$, and let
\[
E^{\fs}: = \{\psi \otimes \tau : \psi \in \Psi^1(\cT)\}.
\]
Then $E^{\fs}$ has the structure of a compact torus.   The subgroup of the Weyl group which fixes $E^{\fs}$ is $W^{\fs}: = \bZ/2\bZ \times \bZ/2\bZ$.   We have the standard projection
\[
\pi^{\fs}: E^{\fs}\q W^{\fs} \to E^{\fs}/W^{\fs}\]
of the extended quotient onto the ordinary quotient.   To simplify notation, we will sometimes write $\pi = \pi^{\fs}$. 

The extended quotient $E^{\fs}\q W^{\fs}$ is the disjoint union of $6$ unit intervals $a,b,c,d,e,f$  and the ordinary quotient $E^{\fs}/W^{\fs}$. In the projection $\pi$, these $6$ intervals assemble themselves into the $6$ edges of a tetrahedron in $E^{\fs}/W^{\fs}$.  The cardinality of each fibre of $\pi$ creates a perfect model of reducibility.   The locus of reducibility is the $1$-skeleton $\mathfrak{R}$ of a tetrahedron, and we have
\[
|\pi^{-1}(\psi\tau)| = |\Ind_{\cB}^{\cG}(\psi\tau)|
\]
for all unramified unitary characters $\psi$ of $T$.   On the interior of each edge $\pi(a), \ldots, \pi(f)$ of $\mathfrak{R}$,  each induced representation admits
 $2$ distinct irreducible constituents; on each vertex of $\mathfrak{R}$, each induced representation admits $4$ distinct irreducible components.
 
 The detailed computations are in \cite{CP}; the theory of the $R$-group for $\SL(N)$, on which these computations depend, is to be found in Goldberg \cite{G}. 

The pre-image of the interior of one edge is the union of two open intervals (the one corresponding to the given edge and one in the ordinary quotient), replicating the fact that the $R$-group has order $2$, while the pre-image of a vertex is the union of three endpoints of intervals and one point in the ordinary quotient, replicating the fact that the $R$-group has order $4$ here. The $1$-skeleton of the tetrahedron is a perfect model of reducibility and confirms the geometric conjecture in this case.   Quite specifically, let
 $\cG = \SL(4, \Qset_2)$ and $\fs = [\cT,\tau]_{\cG}$.   There exists a continuous bijection
\[
\mu^{\fs} : E^{\fs}\q W^{\fs} \to \Irrt(\cG)^{\fs}
\]
such that
\begin{align}
\Sc \circ \mu^{\fs} = \pi^{\fs}.
\end{align}
 
\textsc{Example~3}. \emph{The general linear group}.  Let $\cG = \GL(n), G = \GL(n,\Cset)$. Let
\[
\Phi = \chi \otimes \tau(n)
\]
where $\chi$ is an unramified quasicharacter of $\cW_F$ and $\tau(n)$ is the irreducible $n$-dimensional representation of $\SL(2,\Cset)$.    By local classfield theory, the quasicharacter $\chi$ factors through $F^{\times}$.   In the local Langlands 
correspondence for $\GL(n)$, the image of $\Phi$ is the unramified twist $\chi  \circ  \det$ of the Steinberg representation $\St(n)$.  

The sign representation $sgn$ of the Weyl group $W$ has Springer parameters $(\mathcal{O}_{prin},1)$, where $\mathcal{O}_{prin}$ is the principal orbit in $\mathfrak{gl}(n,\Cset)$. In the \emph{canonical} correspondence between irreducible representations of $S_n$ and conjugacy classes in $S_n$, the trivial representation of
$W$ corresponds to the  conjugacy class containing  the $n$-cycle $w_0 = (123 \cdots n)$.

Now $G_{\Phi} = C(im \, \Phi)$ is connected \cite[\S3.6.3]{CG}, and so acts trivially in homology. 
   Therefore $\rho$ is the unit representation $1$.  The image $\Phi(1,u_0)$ is a regular nilpotent, i.e. a nilpotent with one Jordan block (given by the partition of $n$ with one part).   The corresponding conjugacy class in $W$ is $\{w_0\}$.   The corresponding irreducible component  of the extended quotient is
$$T^{w_0}/Z(w_0) = \{(z,z, \ldots,z): z \in \Cset^{\times}\}  \simeq \Cset^{\times}.$$  This is our model, in the extended quotient picture, of the complex $1$-torus of all unramified twists of the Steinberg representation $\St(n)$. The map from $L$-parameters to pairs $(w,t) \in T \q W$ is given by
\[
\chi \otimes \tau(n) \mapsto (w_0, \chi(\Frob), \dots, \chi(\Frob)).
\]
Among these representations, there is one real tempered representation, namely $\St(n)$, with $L$-parameter $1 \otimes \tau(n)$, 
attached to the principal orbit $\cO_{prin} \subset G$.

More generally, let 
\[
\Phi = \chi_1 \otimes \tau(n_1) \oplus \cdots \oplus \chi_k \otimes \tau(n_k)
\]
where $n_1 + \cdots + n_k = n$ is a partition of $n$.   This determines the unipotent orbit $\cO(n_1, \ldots, n_k) \subset G$.  There is a  conjugacy class
in $W$ attached canonically to this orbit: it contains the product of disjoint cycles of lengths $n_1, \ldots, n_k$. The fixed set is a complex torus, and the
component in $T\q W$ is a product of symmetric products of complex $1$- tori.  See \cite{BP} for more details.
\smallskip

Denote by $\fii$ the Iwahori point in the Bernstein spectrum of $\cG(F): = \GL(n,F)$, so that $\fii = [\cT,1]_{\cG}$. 
Let $T$  be the standard maximal torus in $\GL(n,\Cset)$.   The Weyl group is the symmetric group $S_n$.   We will denote our bijection, in this case canonical, as follows:
\[
\mu_{F}^{\fii} : T\q W \to \Irr(\cG(F))^{\fii}
\]
Let $E/F$ be a finite Galois extension of the local field $F$.  According to \cite[Theorem 4.3]{MP}, we have a commutative diagram
\[
\begin{CD}
T\q W @> \mu_{F}^{\fii} >>  \Irr(\cG(F))^{\fii}\\
@ V   VV        @VV \RBC_{E/F} V\\
T\q W @> \mu_{E}^{\fii} >>  \Irr(\cG(E))^{\fii}
\end{CD}
\]
In this diagram, the right vertical map $\RBC_{E/F}$ is the standard base change map  sending one Reeder parameter to another as follows:
\[
(\Phi,1) \mapsto (\Phi_{|W_E},1).
\]

Let  \[f = f(E,F)\] denote the residue degree of the extension $E/F$.    We proceed to describe the left vertical map.   We note that  the action of W on T is as automorphisms of the algebraic group T.   Since  $T$  is a group, the map \[T \to T, \quad  t \mapsto t^f\]  is well-defined for any positive integer $f$.   The map
\[
\widetilde{T} \to \widetilde{T}, \quad (t,w) \mapsto (t^f,w)
\]
is also well-defined, since
\[
w\cdot t^f = wt^fw^{-1} = wtw^{-1}wtw^{-1} \cdots wtw^{-1} = t^f.
\]
Since
\[
\alpha\cdot(t^f) = (\alpha\cdot t)^f
\]
for all $\alpha \in W$, this induces a map
\[
T\q W \to T\q W
\]
which is an endomorphism (as algebraic variety) of the extended quotient $T\q W$.  We shall refer to this endomorphism as the \emph{base change endomorphism of degree $f$.}  The left vertical map is the base change endomorphism of degree $f$, according to \cite[Theorem 4.3]{MP}.  That is, our bijection $\mu^{\fii}$  is compatible with base change for $\GL(n)$. 

When we restrict our base change endomorphism from the extended quotient $T\q W$ to the ordinary quotient $T/W$, we see that the  commutative diagram 
containing $\RBC_{E/F}$ is consistent with \cite[Lemma 4.2.1]{Haines}.

\bigskip

\emph{Acknowledgement}. We would like to thank  A. Premet for drawing our attention to reference \cite{CG}.

\end{document}